\documentclass{amsart} 
\usepackage{amssymb,latexsym,textcomp}
\usepackage{bm}
\usepackage{xcolor,graphicx}
\usepackage{multirow}
\usepackage{cite} 

\usepackage{hyperref}
\hypersetup{colorlinks,allcolors=blue}
\usepackage[OT2,T1]{fontenc}

\title[Additive preservers of mutual strong BJ orthogonality]{Additive preservers of mutual strong Birkhoff-James orthogonality on finite-dimensional $C^\ast$-algebras}

\author{Bojan Kuzma}
\thanks{This work is supported in part by the Slovenian Research Agency (research program P1-0285 and research projects J1-50000,
N1-0296,
N1-0428,
J1-70047, and
J1-70046) and by the Ministry of Science, Technological Development and Innovation of Republic of Serbia: grant number 451-03-47/2023-1/200104 with Faculty of Mathematics.}
\address{University of Primorska, Glagolja\v{s}ka 8, SI-6000 Koper, Slovenia, and Institute of Mathematics, Physics, and Mechanics, Jadranska 19, SI-1000 Ljubljana, Slovenia}
\email{bojan.kuzma@upr.si}

\author{Srdjan Stefanovi\'c}
\address{University of Belgrade\\ Faculty of Mathematics\\ Student\/ski trg 16-18\\ 11000 Beograd\\ Serbia}
\email{srdjan.stefanovic@matf.bg.ac.rs}

\subjclass[2020]{Primary: 46L05, 47B49, Secondary: 47A30, 15A04}

\keywords{Finite-dimensional $C^{\ast}$-algebra; Additive preserver; Singularity; Mutual strong Birkhoff-James orthogonality.}

\newtheorem{theorem}{Theorem}[section]
\newtheorem{lemma}[theorem]{Lemma}

\newtheorem{corollary}[theorem]{Corollary}

\theoremstyle{definition}

\newtheorem{remark}[theorem]{Remark}
\newtheorem{example}[theorem]{Example}

\newcommand{\A}{\mathcal A}

\newcommand{\C}{\mathbb C}

\newcommand{\strongperp}{\mathrel{\perp\!\!\!\!\perp}^S}
\newcommand{\strongp}{\mathrel{\perp}^S}
\newcommand\rk{\mathop{\mathrm{rk}}}

\def\CC{\mathbb C}
\def\FF{\mathbb F}

\def\rank{\mathop{\mathrm{rk}}}

\def\cV{\mathcal V}

\numberwithin{equation}{section} 

\begin{document}

\begin{abstract}
    We describe additive surjections on direct sum of matrix algebras that preserve singularity in one direction. As an application, we classify additive surjections on finite-dimensional $C^{\ast}$-algebras that preserve mutual strong Birkhoff–James orthogonality in one direction.
\end{abstract}

\maketitle

\maketitle

\section{Introduction, preliminaries and statement of the main results}

In a normed space $X$ we lack an inner product; therefore, a notion of orthogonality is required that coincides with the standard definition
$$x \perp y \Leftrightarrow \langle x,y\rangle = 0$$
whenever $X$ is an inner product space (throughout this paper, the scalar field is assumed to be the field of complex numbers $\mathbb{C}$, unless explicitly mentioned otherwise). One of the most studied notions is the Birkhoff--James orthogonality, which has been investigated for more than 90 years (see the pioneering works of Birkhoff \cite{Birkhoff} and James \cite{James1},\cite{James2},\cite{James3}).

We say that two vectors $x,y \in X$ are Birkhoff--James (BJ for short) orthogonal, and write $x \perp_{BJ} y$, if
$$\|x + \lambda y\| \geq \|x\|, \quad \text{for all } \lambda \in \mathbb{C}.$$

In \cite{ArambasicRajic2014}, Aramba\v{s}i\'c and Raji\'c naturally extended the above definition to (right) Hilbert $C^{\ast}$-modules over a $C^{\ast}$-algebra $\mathfrak{A}$. Namely, for two elements $x,y \in X$ we say that $x$ is strongly BJ orthogonal to $y$, and write $x \perp_{BJ}^{S} y$, if
$$\|x + yc\| \geq \|x\| \quad \text{for all } c \in A.$$
Then, in \cite{ArambasicBJMA2020}, Aramba\v{s}i\'c et al. defined the mutual version of the relation previously considered. Namely, for two elements $x, y \in X$, we say that $x$ and $y$ are mutual strong BJ orthogonal, denoted by $x \strongperp_{BJ}\, y$, if $x \perp_{BJ}^{S} y$ and $y \perp_{BJ}^{S} x$.

In this paper, we will consider a $C^{\ast}$-algebra $\mathfrak{A}$, regarded as a right Hilbert $C^{\ast}$-module over itself. So $a\in \mathfrak{A}$ is strong BJ orthogonal to $b\in\mathfrak{A}$ if for any $c\in \mathfrak{A}$ there holds
\begin{equation}\label{strong BJ}
\|a+bc\|\ge\|a\|.
\end{equation}

The essence of this relation can be studied by the orthograph introduced in \cite{ArambasicBJMA2020}. Recall that the vertices of this orthograph are all elements of the projective space over $\mathfrak{A}$ , i.e., $\{[a]=\CC a;\;\; a\in\mathfrak{A}\setminus\{0\}\}$, and two vertices $[x]$ and $[y]$ form an edge if $x\strongperp_{BJ}\, y$ for some representatives $x\in[x]$ and $y\in[y]$. In \cite{ArambasicBJMA2020} its  diameter and isolated vertices were determined in case when $\mathfrak{A}=B(H)$, where $H$ is Hilbert space, and also in  case when  $\mathfrak{A}$ is a  commutative $C^{\ast}$-algebra. This was essentially used to determine linear preservers of mutual strong BJ orthogonality on $B(H)$ (see \cite{StrongMutualPreservers}).  Later, in \cite{Keckic2023JMAA}, isolated vertices in an arbitrary $C^{\ast}$-algebra are described, and an estimate of the diameter is given. Finally, in \cite{Stefanovic}, this orthograph was considered when $\mathfrak{A}$ is a finite-dimensional $C^{\ast}$-algebra, i.e.,
\begin{equation}\label{finite-dimensional}
    \mathfrak{A}\cong M_{n_1}(\CC)\oplus M_{n_2}(\C)\oplus\dots\oplus M_{n_k}(\C),
\end{equation}
for some natural numbers $k,n_1,n_2,\dots,n_k$ up to permutation, and the diameter was determined. To avoid confusion, in the two aforementioned papers the orthograph on the projective space was considered, whereas here our ambient space will be the entire $C^{\ast}$-algebra. We will repeatedly use a strong version of the well-known Stampfli-Magajna-Bhatia-\v{S}emrl theorem (for the proof see \cite[Proposition 2.8]{ArambasicRajic2014}; see also papers ~\cite{STAMPF}, ~\cite{MAGAJN} and ~\cite{Bhat-Semr} for a retrospective).

\begin{theorem}\label{THEOREM}
    Let $A,B\in M_n(\C)$. Then $A\perp_{BJ}^{S}\,B$ if and only if there is a unit vector $x\in\C^n$ such that $\|Ax\|=\|A\|$ and $B^{\ast}Ax=0$.
\end{theorem}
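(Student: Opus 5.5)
The plan is to reduce the strong orthogonality condition to the classical Bhatia--Šemrl characterisation of ordinary Birkhoff--James orthogonality, and then use the freedom in the right multiplier $C$ to upgrade ``$\langle Ax,Bx\rangle=0$'' to ``$B^{\ast}Ax=0$''.

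\emph{Sufficiency.} Assume $\|x\|=1$, $\|Ax\|=\|A\|$ and $B^{\ast}Ax=0$. For any $C\in M_n(\C)$ we have
\[
\|A+BC\|^2\ge \|(A+BC)x\|^2=\|Ax\|^2+2\,\mathrm{Re}\langle BCx,Ax\rangle+\|BCx\|^2 .
\]
The cross term equals $\mathrm{Re}\langle Cx,B^{\ast}Ax\rangle=0$, so $\|A+BC\|\ge\|Ax\|=\|A\|$.

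\emph{Necessity.} Assume $\|A+BC\|\ge\|A\|$ for all $C$. We may take $A\neq0$, since otherwise any unit $x$ works. Let $P$ be the orthogonal projection onto the range of $B$, and recall that $\ker B^{\ast}=(\operatorname{ran}B)^{\perp}$. The target statement is therefore: there is a unit $x$ with $\|Ax\|=\|A\|$ and $PAx=0$.

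The key observation is that the operators $BC$ exhaust all $T\in M_n(\C)$ with $\operatorname{ran}T\subseteq\operatorname{ran}B$. This holds because $B$ maps onto its range, so any such $T$ factors as $BC$. In particular $BC=\lambda PA$ is attainable for every $\lambda\in\C$. Hence $\|A+\lambda PA\|\ge\|A\|$ for all $\lambda$, that is, $A\perp_{BJ}PA$ in the ordinary sense.

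By the Bhatia--Šemrl theorem (the classical, non-strong version) there is a unit vector $x$ with $\|Ax\|=\|A\|$ and
\[
0=\langle Ax,PAx\rangle=\|PAx\|^2 .
\]
So $PAx=0$, which gives $B^{\ast}Ax=0$.

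\emph{Main obstacle.} The only nontrivial input is the Bhatia--Šemrl theorem itself. If it cannot be quoted, I would prove the needed special case directly, using the reduction $T=PA$. Let $M$ be the maximal eigenspace of $A^{\ast}A$, and suppose $PAx\neq0$ for every unit $x\in M$. Then the sesquilinear form $x\mapsto\|PAx\|^2$ is bounded below by some $\delta>0$ on the compact unit sphere of $M$.

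Now take $\lambda=-t$ with small $t>0$:
\[
\|(A-tPA)x\|^2=\|Ax\|^2-(2t-t^2)\|PAx\|^2 .
\]
On $M$ this is strictly less than $\|A\|^2$. Away from $M$ a standard compactness/continuity argument (splitting $x=x_M+x_{M^\perp}$ and controlling cross terms by $O(t)$ against the spectral gap of $A^{\ast}A$) keeps the norm below $\|A\|$ for small $t$. Together these give $\|A-tPA\|<\|A\|$, a contradiction. The delicate part is this cross-term estimate; the rest is routine.
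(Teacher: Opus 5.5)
Your proof is correct. The paper gives no argument of its own; it simply quotes \cite[Proposition~2.8]{ArambasicRajic2014}. So your write-up is a self-contained replacement rather than a variant of anything in the text.

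Both directions are sound. Sufficiency follows by expanding $\|(A+BC)x\|^2$. For necessity, $\{BC\}$ contains every $\lambda PA$, so $A\perp_{BJ}PA$, and the classical Bhatia--\v{S}emrl theorem applies with $\langle Ax,PAx\rangle=\|PAx\|^2$. (Choosing $C=\lambda B^{\ast}A$ instead gives $A\perp_{BJ}BB^{\ast}A$ and $\langle Ax,BB^{\ast}Ax\rangle=\|B^{\ast}Ax\|^2$, with no factorisation step.)

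The ``main obstacle'' you anticipate is not real, and seeing this removes the need for Bhatia--\v{S}emrl altogether. Your identity $\|(A-tPA)x\|^2=\|Ax\|^2-(2t-t^2)\|PAx\|^2$ holds for every $x$, not only on the top eigenspace $M$. So no spectral-gap or cross-term estimate is needed. Take $t=1$, i.e.\ choose $C$ with $BC=-PA$, so that $A+BC=(I-P)A$. Let $x$ be a unit vector at which $\|(I-P)A\|$ is attained. Then
\[
\|A\|^2\le\|(I-P)Ax\|^2=\|Ax\|^2-\|PAx\|^2\le\|A\|^2-\|PAx\|^2,
\]
which forces $PAx=0$ (equivalently $B^{\ast}Ax=0$) and $\|Ax\|=\|A\|$ at once. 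The freedom in the right multiplier lets you subtract the whole component $PA$ in one step. This makes the strong version more elementary than the ordinary Bhatia--\v{S}emrl theorem, where only scalar multiples of a fixed operator may be subtracted.
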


In this paper, we study additive preservers. They have been studied for many years (see, for example, the papers ~\cite{OmladicSemrl} and ~\cite{KuzmaAdditive}). Recently, there was a renewed interest in additive preservers of the orthogonality relation --- see the paper of W\'ojcik  \cite[Section 4]{Wojcik}, where he proved that the additive preserver of BJ orthogonality on real normed space is automatically linear and hence, from the famous result by Oman~\cite[Lemma 3.3, page 38]{Oman}, it is a scalar multiple of an isometry. We remark in passing that Blanco and Turn\v{s}ek \cite{BlancoTurnsek2006} 
extended this result by classifying bijections preserving BJ orthogonality in both directions on projective spaces induced by reflexive smooth spaces.
 Also, in two papers by Li, Liu, and Peralta (see \cite{LiLiuPeralta} and \cite{LiLiuPeralta2}), W\'ojcik's result was extended to complex normed spaces. In \cite[Proposition 2.2]{LiLiuPeralta} it was shown that an additive map $\Phi$, which preserves the BJ orthogonality between two complex normed  spaces $X,Y$, with $\dim X\ge 2$ and admitting a conjugation (i.e., there exists a conjugate-linear and involutive isometry on $X$) is automatically real-linear; if, in addition, $\Phi$ is surjective and preserves BJ orthogonality in both directions, then it is automatically continuous. This result was extended in \cite[Lemma 2.14]{LiLiuPeralta2} where it was shown that $\Phi$ admits a unique real-linear extension to a BJ preserver on completions of $X$ and $Y$ provided that, besides the previous assumptions on $X$, we also have that $X^\ast$ is strictly convex.

Our main contribution here is the following result. Recall that, for a given orthonormal basis $e_1,\dots,e_n$ of $\CC^n$, the map
$$J\colon x=\sum \langle x,e_i\rangle e_i\mapsto \overline{x}:=\sum \overline{\langle x,e_i\rangle }e_i$$
is a conjugate-linear isometry on $\CC^n$, and it induces a conjugate-linear isometry on a $C^\ast$-algebra $M_n(\CC)$ by transforming a matrix $X\in M_n(\CC)$ to $$X\mapsto JXJ.$$
It is immediate that $JXJ=\overline{X}$ is an (entrywise) conjugation of the elements of $X$.
\begin{theorem}\label{THM:Main}
  Let $\mathfrak{A}$ be a finite-dimensional complex $C^\ast$-algebra, not isomorphic to $\CC$, $\CC\oplus\CC$ and $M_2(\CC)$, and let $\Phi\colon\mathfrak{A}\to\mathfrak{A}$ be an additive surjection.  Then, the following are equivalent
  \begin{itemize}
      \item[(i)] $\Phi$ preservers mutual strong BJ orthogonality.
      \item[(ii)] There exists a permutation $\pi$ of minimal ideals of $\mathfrak{A}$, which respects their dimension, a positive scalar $\gamma$ and unitary elements $u,v\in\mathfrak{A}$ such that
      $$\Phi(a)=\gamma u\pi(a)^\dagger v,$$
      where $x\mapsto x^\dagger$  denotes a map which is identity on some (possibly none) minimal ideals and entrywise conjugation with respect to a fixed orthonormal basis on other minimal ideals.
     \end{itemize}
\end{theorem}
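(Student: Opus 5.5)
The direction (ii)$\Rightarrow$(i) is a direct check. Each of $a\mapsto ua$, $a\mapsto av$ (with $u,v$ unitary), $a\mapsto\gamma a$, the permutation $\pi$ of equal-dimensional minimal ideals, and entrywise conjugation on a block is either a (conjugate-)linear isometry or respects the right module structure well enough to preserve \eqref{strong BJ}. Concretely, for $a\perp^S_{BJ} b$ we have
\[
\|\Phi(a)+\Phi(b)c\|=\gamma\|u(\pi(a)^\dagger+\pi(b)^\dagger v c v^\ast)v\|=\gamma\|\pi(a)^\dagger+\pi(b)^\dagger c'\|,
\]
where $c'=vcv^\ast$. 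Since $\pi$ and $\dagger$ are multiplicative, norm-preserving bijections, this is $\ge\gamma\|a\|=\|\Phi(a)\|$. By symmetry the same holds with the roles of $a$ and $b$ exchanged.

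For (i)$\Rightarrow$(ii) I would translate mutual strong BJ orthogonality into an algebraic condition and reduce to a singularity preserver. Write $a=(a_1,\dots,a_k)$, so that $\|a\|=\max_i\|a_i\|$. Theorem~\ref{THEOREM}, applied blockwise, shows that $a\perp^S_{BJ} b$ iff for some block $i$ attaining $\|a_i\|=\|a\|$ there is a unit norm-attaining vector $x$ of $a_i$ with $b_i^\ast a_i x=0$. The key observation is that $0$ is mutually strongly BJ orthogonal to everything, and that $a \strongperp_{BJ}\, a$ forces $a=0$. More usefully, $a\strongperp_{BJ}\, b$ for \emph{some} $b$ with $\Phi$-image control is tied to singularity. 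I would show that an element $a$ is singular (some block $a_i$ is non-invertible) iff there is a nonzero $b$ that is mutually strongly BJ orthogonal to $a+\lambda$-type perturbations, or, more directly, iff the set $\{b: a\strongperp_{BJ}\, b\}$ is ``large''. The plan is to characterize, via additivity, the set $\{a : a+x\strongperp_{BJ}\, y \text{ for suitable }x,y\}$ as exactly the singular elements. Then $\Phi$ maps singular elements to singular elements, since additivity lets us move translates around.

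Next I would invoke the classification of additive surjections on $M_{n_1}\oplus\dots\oplus M_{n_k}$ preserving singularity in one direction, which is the paper's first announced result. I expect it to give $\Phi(a)=u\,\tau(\pi(a))\,v$, where $u,v$ are invertible, $\tau$ acts blockwise as a ring automorphism (field automorphism of $\CC$ applied entrywise), possibly composed with transposition, and $\pi$ permutes blocks of equal size. The excluded algebras $\CC$, $\CC\oplus\CC$ and $M_2(\CC)$ are exactly where singularity preservers are too wild. The main obstacle is then upgrading this form. The field automorphism must be identity or conjugation, which I would get by showing $\Phi$ is continuous/bounded: mutual strong BJ orthogonality of rank-one-type elements $xy^\ast$ forces $\tau$ to map the unit circle and the reals appropriately, and a discontinuous automorphism would destroy orthogonality between $E_{11}$ and $E_{11}+\lambda E_{12}$-type pairs. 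Transposition must be excluded, because transposition reverses the one-sided (right-module) nature of $\perp^S_{BJ}$. I would test this on $E_{11}\strongperp_{BJ}\, E_{21}$ versus $E_{11}, E_{12}$ using $b^\ast a x=0$. Finally, the invertibles $u,v$ must be scalar multiples of unitaries, with a common scalar across blocks because $\|\cdot\|$ is a max over blocks. For this I would use that orthogonality of $P$ and $I-P$ (projections) must be preserved, which forces $u^\ast u$ and $vv^\ast$ to be scalar, via the standard argument that a map preserving orthogonality of rank-one idempotents in both factors is isometric up to scale. Matching scalars between blocks uses test pairs supported on two different blocks whose norms must balance.
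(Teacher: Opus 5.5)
Your (ii)$\Rightarrow$(i) computation is correct. Your transposition test also works without first normalizing $P$: the images $PE_{11}Q$ and $PE_{12}Q$ of the orthogonal pair $E_{11}\strongperp_{BJ}E_{21}$ both have range $\CC Pe_1$, so they are not orthogonal.

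The genuine gap is the step that makes the right factor $v$ (the paper's $Q_i$) a scalar multiple of a unitary, because orthogonality of $E$ and $I-E$ cannot detect it. For rank-one $A$ one has $A\strongperp_{BJ}B$ iff $A^\ast B=0$. Now $(uEv)^\ast u(I-E)v=v^\ast Eu^\ast u(I-E)v$ vanishes iff $Eu^\ast u(I-E)=0$. This forces $u^\ast u$ to be scalar but is unchanged if $v$ is replaced by any invertible. Likewise, once $u^\ast u$ is scalar, $uPv\strongperp_{BJ}u(I-P)v$ holds for every projection $P$ and every invertible $v$. This is not a technicality. On $M_2(\CC)$ every nonzero element with a nonzero mutual partner is rank-one, so $A\mapsto AQ$ is a preserver for \emph{every} invertible $Q$. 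That, and not wildness of singularity preservers (on $M_2(\CC)$ these are $PA^\sigma Q$ or $P(A^T)^\sigma Q$ by Theorem~\ref{singularity_preservers}), is why $M_2(\CC)$ is excluded. You need test pairs in which right multiplication moves the norm-attaining vectors. For $n_i\ge3$, use e.g. $e_1\otimes e_1+e_3\otimes e_3\strongperp_{BJ}e_2\otimes e_2+e_3\otimes e_3$. Its image under $A\mapsto AQ$ forces $\|Q^\ast e_1\|\ge\|Q^\ast e_3\|$, hence, by unitary conjugation and symmetry, $\|Q^\ast f\|$ is constant on unit vectors. For $n_i=2$ you must use a second block, e.g. $(x\otimes x,I)\strongperp_{BJ}(x'\otimes x',I)$, which gives $\|Q_1^\ast x\|\ge\|Q_2\|$ and its symmetric counterpart (Lemmas~\ref{Case_C_plus_M} and~\ref{Case_M_plus_M}).

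Two further steps are missing. First, on $1\times1$ summands Theorem~\ref{singularity_preservers} says nothing. In $\CC^3$ or $\CC\oplus M_3(\CC)$, any additive bijection of $\CC$ placed in such a slot preserves singularity, so the form $u\,\tau(\pi(a))\,v$ with a field automorphism $\tau$ is not delivered there. Your rank-one range-orthogonality argument also has nothing to act on inside a $1\times1$ block. You need cross-block norm comparisons giving $|\varphi(\lambda)|\le|\varphi(1)|$ for $|\lambda|\le1$, with equality on the unit circle. From this follow continuity and $\varphi(\lambda)=c\lambda$ or $c\bar\lambda$, with $|c|$ matched to the other blocks (the paper's lemma on $\CC^k$, $k\ge3$, and Lemma~\ref{Case_C_plus_M}).

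Second, the singularity step is not yet an argument. As written, $\{a: a+x\strongperp_{BJ}y\}$ is all of $\mathfrak{A}$ (take $x=-a$). The real obstacle, that a one-directional preserver may send every orthogonal partner of $a$ to $0$, is not addressed. The paper turns this obstacle into the proof. If $\Phi(T)$ were invertible, every $b$ with $T\strongperp_{BJ}b$ would satisfy $\Phi(b)=0$, since $\Phi(b)\perp^S_{BJ}\Phi(T)$. For $\rank T\le N-2$, take $b_i=p\,x_i\otimes y_i$ with $i=N-1,N$ from the completed SVD and $p>\|T\|$. Then $T+b_{N-1}\strongperp_{BJ}T+b_N$, hence $\Phi(T)\strongperp_{BJ}\Phi(T)$, i.e.\ $\Phi(T)=0$. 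The case $\rank T=N-1$ needs $N\ge3$, and Example~\ref{exam:32} shows that $\CC\oplus\CC$ genuinely fails here.

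Finally, $E_{11}$ and $E_{11}+\lambda E_{12}$ are not orthogonal. To pin down $\sigma$, use range-orthogonal pairs such as $(e_\ell+\lambda e_j)\otimes e_\ell$ and $(\bar\lambda e_\ell-e_j)\otimes e_\ell$, which give $\sigma(\bar\lambda)=\overline{\sigma(\lambda)}$ once $P^\ast P$ is scalar.
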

We remark that the structure of additive preservers of mutual strong BJ orthogonality on $\CC$, $\CC\oplus\CC$ or $M_2(\CC)$,
is different; we refer to the last section for more details.

Clearly, each map which preserves strong BJ orthogonality also preserves mutual strong BJ orthogonality. Hence, as an immediate corollary, we can classify additive preserves of strong BJ orthogonality:
\begin{corollary}
Under the assumptions of Theorem~\ref{THM:Main}, an additive surjection $\Phi\colon\mathfrak{A}\to\mathfrak{A}$ preserves strong BJ orthogonality if and only if it equals one of the forms in item (ii) if Theorem~\ref{THM:Main}. 
\end{corollary}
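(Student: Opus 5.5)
The corollary follows from Theorem~\ref{THM:Main} by proving the two implications separately.

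\emph{Necessity.} Let $\Phi$ be an additive surjection preserving strong BJ orthogonality, so that $a\perp_{BJ}^S b$ implies $\Phi(a)\perp_{BJ}^S\Phi(b)$. If $a\strongperp_{BJ}\, b$, then $a\perp_{BJ}^S b$ and $b\perp_{BJ}^S a$. Applying the hypothesis to each relation gives $\Phi(a)\perp_{BJ}^S\Phi(b)$ and $\Phi(b)\perp_{BJ}^S\Phi(a)$, that is, $\Phi(a)\strongperp_{BJ}\,\Phi(b)$. Hence $\Phi$ satisfies item (i) of Theorem~\ref{THM:Main}, and by that theorem it has the form in item (ii).

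\emph{Sufficiency.} Now let $\Phi(a)=\gamma u\pi(a)^\dagger v$ as in item (ii); we show directly that it preserves strong BJ orthogonality. Decompose $\mathfrak{A}$ as in \eqref{finite-dimensional}, so the norm is the maximum of the operator norms over the blocks. Write $a=(a_i)$ and $b=(b_i)$ blockwise. The condition $\|a+bc\|\ge\|a\|$ for all $c\in\mathfrak{A}$ holds exactly when there is an index $i$ with $\|a_i\|=\|a\|$ and $a_i\perp_{BJ}^S b_i$ in $M_{n_i}(\C)$.
\begin{itemize}
\item[] If such an $i$ exists, then $\|a+bc\|\ge\|a_i+b_ic_i\|\ge\|a_i\|=\|a\|$.
\item[] Conversely, suppose that for every block $i$ with $\|a_i\|=\|a\|$ we have $a_i\not\perp_{BJ}^S b_i$. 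Choose $c_i$ with $\|a_i+b_ic_i\|<\|a_i\|$ on these blocks. Scale these $c_i$ by a common small $t>0$; by convexity of $t\mapsto\|a_i+tb_ic_i\|$ the strict decrease persists. Take $c_j=0$ on the remaining blocks, whose norms are strictly below $\|a\|$. This gives $\|a+bc\|<\|a\|$.
\end{itemize}
We could also use Theorem~\ref{THEOREM} blockwise, but this is not needed.

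Each ingredient of $\Phi$ preserves this blockwise criterion:
\begin{itemize}
\item[] The positive scalar $\gamma$ rescales norms uniformly and leaves the relation unchanged.
\item[] The permutation $\pi$ only permutes blocks of equal dimension.
\item[] Left multiplication by a unitary $u$ gives $\|ua+ubc\|=\|a+bc\|$, so $a\perp_{BJ}^S b$ if and only if $ua\perp_{BJ}^S ub$.
\item[] For right multiplication by a unitary $v$, note that $bc$ ranges over $b\mathfrak{A}$. Then $\|av+bv\,(v^\ast cv)\|=\|(a+bc)v\|=\|a+bc\|$, and $v^\ast cv$ ranges over all of $\mathfrak{A}$. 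Hence the relation is preserved.
\item[] Entrywise conjugation on a block is isometric and satisfies $\overline{a_i+b_ic_i}=\overline{a_i}+\overline{b_i}\,\overline{c_i}$. As $c_i$ ranges over $M_{n_i}(\C)$, so does $\overline{c_i}$, so the relation is preserved on that block.
\end{itemize}
Composing these steps, $a\perp_{BJ}^S b$ implies $\Phi(a)\perp_{BJ}^S\Phi(b)$. Each of these maps is also surjective and additive, consistent with the hypotheses.

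All the difficulty is contained in Theorem~\ref{THM:Main}. The only points needing care are the reduction to blocks in the sufficiency part, and checking that the right unitary factor $v$ can be absorbed into $c$. The latter works because $\mathfrak{A}$ is a unital algebra.
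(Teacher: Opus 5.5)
Your argument is correct, and the necessity direction is exactly the paper's: a strong BJ preserver is automatically a mutual strong BJ preserver, so Theorem~\ref{THM:Main} applies. The paper leaves the converse implicit, and your direct verification supplies it. The blockwise criterion is not actually needed, because $a\mapsto\pi(a)^\dagger$ is an isometric, additive, multiplicative bijection of $\mathfrak{A}$. Hence $\|\pi(a)^\dagger+\pi(b)^\dagger c'\|=\|a+bc\|$ with $c'=\pi(c)^\dagger$ ranging over all of $\mathfrak{A}$, just as in your treatment of $u$, $v$ and $\gamma$.
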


Our main tool will be a reduction to preservers of singularity. Since this is of independent interest, we shall state it here, after clarifying the following terminology: Any permutation $\pi$  of $\{1,.\dots,k\}$  induces a linear bijection, again denoted by $\pi$,  on a block-diagonal matrix algebra $\A=\bigoplus_{1}^k M_{n_i}(\FF)$, given by $\bigoplus A_i\mapsto \bigoplus A_{\pi(i)}$. For example, the transposition $\tau=(1,2)$ will map $A\oplus B$ into $B\oplus A$.  

\begin{theorem}\label{singularity_preservers}
Let $\FF$ be a field of characteristic zero and let $k,n_1,\dots,n_k\ge 1$ be integers. The following is equivalent for an additive surjection $$\Phi\colon M_{n_1}(\FF)\oplus\dots\oplus M_{n_k}(\FF)\to
 M_{n_1}(\FF)\oplus\dots\oplus M_{n_k}(\FF).$$
\begin{itemize}
    \item[(i)] $\Phi$ maps singular elements into singular elements. 
    \item[(ii)] There exists a permutation of blocks $\pi$, which respects their dimension, such that 
    $$\Phi(M_{n_i}(\FF))=\pi(M_{n_i}(\FF)).$$ Moreover, if $n_i\ge 2$ there exist a field endomorphism $\sigma_i\colon\FF\to\FF$ and invertible matrices $P_i,Q_i\in M_{n_i}(\FF)$,  $(i=1,\dots,k)$, such that 
    $$\Phi\colon A_i\mapsto P_i\pi(A_i)^{\sigma_i} Q_i\quad \hbox{ or }\quad \Phi\colon A_i\mapsto P_i\pi(A_i^T)^{\sigma_i} Q_i;\qquad  A_i\in M_{n_i}(\FF).$$
\end{itemize} 
\end{theorem}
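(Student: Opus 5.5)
The implication (ii)$\Rightarrow$(i) is immediate. An element $A=\bigoplus A_i$ is singular if and only if some block $A_i$ is singular. The maps in (ii) permute the blocks and act on each block by $A_i\mapsto P_iA_i^{\sigma_i}Q_i$, possibly composed with a transpose. Such a map sends a singular block to a singular block: $\det(A^{\sigma})=\sigma(\det A)$, and $\sigma$ is injective. For $n_i=1$ we must likewise have $\Phi(0\text{ in block }i)$ being $0$ in the image block, which is exactly what the statement $\Phi(M_{n_i})=\pi(M_{n_i})$ encodes together with additivity. The real work is (i)$\Rightarrow$(ii).

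\textbf{Step 1: $\Phi$ is bijective and preserves the block-ideal decomposition.}
\begin{itemize}
\item $\Phi$ is $\mathbb{Q}$-linear, since it is additive and $\mathrm{char}\,\FF=0$.
\item Call $\mathcal S$ the set of singular elements. It is the union of the maximal ``linear'' pieces $\mathcal S_i=\{A: A_i \text{ singular}\}$.
\item For injectivity, suppose $\Phi(X)=0$ with $X\neq 0$. Then $\Phi(A+\lambda X)=\Phi(A)$ for all rational $\lambda$. Choose $A$ with $A+\lambda X$ singular for all $\lambda$ and with the line avoiding a given invertible preimage; this should force a contradiction via surjectivity. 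A simpler route: show that $\Phi^{-1}(\mathcal S)$ is a $\mathbb{Q}$-subspace union containing $\mathcal S$, and that $\ker\Phi+\mathcal S\subseteq \Phi^{-1}(\mathcal S)$. Surjectivity then gives some $A$ with $\Phi(A)$ invertible. If $X\in\ker\Phi$ is nonzero, say $X_j\neq 0$, pick an invertible $B$ with $B-X$ singular (possible when $X_j$ is nonzero: adjust the $j$-th block). Then $\Phi(B)=\Phi(B-X)$ is singular, for every such $B$. The set of such $B$ spans everything additively, giving a contradiction.
\item With bijectivity, use the standard counting idea. Sums of singular elements exhaust $\mathfrak A$, and the key object is the set $\mathcal Z_i=\{A: A+S\in\mathcal S \ \forall S\in \mathcal S \text{ with } \ldots\}$. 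More concretely, the block $M_{n_i}$ (embedded as elements with zeros elsewhere) is characterized as follows. If $k\ge2$, an element $Z$ lies in some single block $M_{n_j}$ up to structure iff $Z+\mathcal S_\ell\subseteq\mathcal S$ for all $\ell\ne j$. I would instead characterize the subspaces $\mathcal S_i$ as the maximal additive subgroups contained in $\mathcal S$. That every additive subgroup of $\mathcal S$ lies in some $\mathcal S_i$ follows from a Dieudonné/Flanders-type argument: a subspace of singular matrices containing elements outside every $\mathcal S_i$ yields, by adding suitable $\mathbb Q$-combinations, an invertible element. This needs care since we only have a $\mathbb Q$-structure.
\item Then $\Phi$ maps each $\mathcal S_i$ into some $\mathcal S_{\pi(i)}$. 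Bijectivity and maximality give equality, and dimension counting gives $n_i=n_{\pi(i)}$. Intersecting, $M_{n_i}=\bigcap_{\ell\neq i}\mathcal S_\ell\cap\{\ldots\}$ is carried onto $\pi(M_{n_i})$.
\end{itemize}

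\textbf{Step 2: blockwise description.} Restrict $\Phi$ to $M_{n_i}\to M_{n_{\pi(i)}}$, which is additive and bijective. Within the block, singular $A_i$ gives $A_i\oplus 0\oplus\cdots$, whose image must be singular. Using $A_i\oplus I\oplus\cdots\oplus I$ instead (singular), the image is $\Phi(A_i)\oplus\Phi(I\text{-rest})$. The rest is invertible after arranging via Step 1, so the $\pi(i)$-th block $\Phi(A_i)$ is singular. Thus each block map is an additive bijective singularity preserver on $M_n(\FF)$. For $n\ge2$, invoke the known classification of additive singularity preservers (Omladič–Šemrl style, cf.\ \cite{OmladicSemrl}). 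Via rank-one preservation, this gives the form $P A^{\sigma}Q$ or $P(A^T)^{\sigma}Q$ with $\sigma$ a field endomorphism.

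The main obstacle is Step 1: the maximal-subgroup characterization of the $\mathcal S_i$ and injectivity with only additivity. I would first try proving it for $\mathbb Q$-subspaces via a determinant-polynomial argument. A $\mathbb Q$-subspace $V$ of singular elements has $\prod_i\det(A_i)=0$ on $V$. This polynomial vanishes on $V\otimes\FF$ Zariski-densely, since $\mathbb Q$ is infinite, so it vanishes on the $\FF$-span. Irreducibility of the factors then forces one factor $\det(A_i)$ to vanish identically.
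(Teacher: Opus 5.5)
Your proof of (ii)$\Rightarrow$(i) is fine. Step~1 of (i)$\Rightarrow$(ii), however, has genuine gaps, and it is where all the difficulty lies.

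First, $\Phi$ need not be injective. Over $\FF=\mathbb R$, take an additive surjection $f\colon\mathbb R\to\mathbb R$ with nonzero kernel (built from a Hamel basis). Then $\lambda\oplus A\mapsto f(\lambda)\oplus A$ on $\mathbb R\oplus M_2(\mathbb R)$ is an additive surjective singularity preserver, even of the form (ii), and it is not bijective. Your injectivity argument is also a non sequitur: $\Phi$ mapping an additively spanning set into singular elements contradicts nothing, since it already does so with the singular set itself (which spans $\A$ unless $\A=\FF$).

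Second, for $n_i\ge2$ the set $\mathcal S_i=\{A: A_i\text{ singular}\}$ is not closed under addition ($E_{11}+E_{22}=I$ in $M_2$). So it is not a maximal additive subgroup of $\mathcal S$, and you cannot deduce $\Phi(\mathcal S_i)\subseteq\mathcal S_{\pi(i)}$. Your determinant/Zariski-density argument is correct, but it only shows that an additive subgroup of singular elements has singular projection onto some single block. Applied, e.g., to $\Phi(\{A:A_i=0\})$, this is far from showing that blocks go to blocks, and your description of the embedded block is left as ``$\{\ldots\}$''. Even granting bijectivity, the equality $\Phi(\mathcal S_i)=\mathcal S_{\pi(i)}$ would need $\Phi^{-1}$ to preserve singularity.

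Third, ``dimension counting'' is unavailable for merely additive maps: over $\mathbb R$ or $\mathbb C$, $M_1(\FF)$ and $M_2(\FF)$ are isomorphic as additive groups.

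The missing idea is the reduction to rank one. Your Step~2 needs it too when it says ``via rank-one preservation'': \cite{OmladicSemrl} presupposes rank-one preservation, and the passage from singularity to rank one is the Fo\v{s}ner--\v{S}emrl argument \cite{FosnerSemrl}. The paper carries this out in Lemma~\ref{main_singular}, showing that $\Phi$ sends rank-one elements to elements of rank at most one. It normalizes $\Phi(E_{11})$ to a diagonal idempotent of rank $\ell\ge2$ and compresses to the complementary corner $\cV$. It then shows the compression of every $\Phi(T)$ is singular, using Lemma~\ref{finite_many_singular} for $\rk T\le N-2$ and \cite[Lemma 2.1]{FosnerSemrl} for higher ranks; this contradicts surjectivity.

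From there:
\begin{itemize}
\item Blocks-to-blocks follows elementarily from rank-one matrices sharing a row or column (Lemma~\ref{blocks_to_blocks}).
\item Lim's theorem on additive rank-one nonincreasing maps \cite{Lim1} gives the semilinear form on blocks with $n_i\ge2$.
\item Block sizes are matched using that semilinearity; a $1\times1$ block cannot cover a larger one, since its image has rank at most one.
\end{itemize}
No injectivity is claimed or needed anywhere.
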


\section{Additive preservers of singularity}

Within this section, we prove Theorem~\ref{singularity_preservers}. We remark that the  main idea, which is a reduction to additive rank-one nonincreasing maps, comes from a paper by Fo\v{s}ner and \v{S}emrl,~\cite{FosnerSemrl}. Throughout this section, 
$$\A=
M_{n_1}(\FF)\oplus M_{n_2}(\FF)\oplus\dots\oplus M_{n_k}(\FF)$$
is a block-diagonal algebra over the field $\FF$, with $\mathrm{char}(\FF)=0$.

\begin{lemma}\label{finite_many_singular}
    Let $m_1,m_2\ge1$ be integers and $A_{22}\in\mathrm{GL}_{m_2}(\FF)$. 
    Then $$\begin{pmatrix}
        pI_{m_1}+A_{11} & A_{12}\\
        A_{21} & A_{22}
    \end{pmatrix}\in M_{m_1+m_2}(\FF)$$ is invertible except for at most finitely many values $p \in \mathbb{N}=\{1,2,\dots\}$. 
\end{lemma}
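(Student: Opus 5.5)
Since $A_{22}$ is invertible, we compute the determinant through the Schur complement with respect to the lower-right block:
$$\det\begin{pmatrix}
        pI_{m_1}+A_{11} & A_{12}\\
        A_{21} & A_{22}
    \end{pmatrix}=\det A_{22}\cdot\det\bigl(pI_{m_1}+A_{11}-A_{12}A_{22}^{-1}A_{21}\bigr).$$
This is the standard block factorization: multiply on the right by the block unipotent matrix with $-A_{22}^{-1}A_{21}$ in the lower-left corner, which has determinant one. Put $M:=A_{11}-A_{12}A_{22}^{-1}A_{21}\in M_{m_1}(\FF)$. The matrix is then singular exactly when $\det(pI_{m_1}+M)=0$, because $\det A_{22}\neq 0$.

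Next consider the polynomial $f(t)=\det(tI_{m_1}+M)\in\FF[t]$. It is monic of degree $m_1\ge1$; indeed, $f(t)=(-1)^{m_1}\chi_M(-t)$, where $\chi_M$ is the characteristic polynomial of $M$. So $f$ is a nonzero polynomial and has at most $m_1$ roots in $\FF$.

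The only subtle point is how the naturals sit inside $\FF$. Because $\mathrm{char}(\FF)=0$, the map $\mathbb{N}\to\FF$, $p\mapsto p\cdot 1_\FF$, is injective. Hence distinct $p\in\mathbb{N}$ give distinct elements of $\FF$, and at most $m_1$ values of $p$ can be roots of $f$. For every other $p\in\mathbb N$ the block matrix is invertible, which proves the lemma.

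No step here is a real obstacle. One only needs to remember that the characteristic-zero assumption is what makes ``finitely many $p$'' meaningful, since in positive characteristic infinitely many naturals collapse onto the same field element.
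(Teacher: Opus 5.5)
Your proof is correct and is essentially the paper's argument: the paper notes that the determinant is a polynomial in $p$ of degree $m_1$ with leading coefficient $\det A_{22}\neq 0$, and your Schur-complement factorization $\det A_{22}\cdot\det(pI_{m_1}+M)$ simply makes that leading coefficient explicit. Your remark that $\mathrm{char}(\FF)=0$ is what makes distinct $p\in\mathbb{N}$ give distinct elements of $\FF$ correctly fills in a point the paper leaves implicit.
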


\begin{proof}
    Observe that $\det\begin{pmatrix}
        pI_{m_1}+A_{11} & A_{12}\\
        A_{21} & A_{22}
    \end{pmatrix}$ is a polynomial in $p$ of degree $m_1$. Moreover, the coefficient of $p^{m_1}$ is equal to $\det(A_{22})$ which is nonzero. Hence, it has at most $m_1$ zeros (some of which may lie outside $\mathbb{N}$), and the claim follows.
\end{proof}

\begin{lemma}\label{main_singular}
    Let $\Phi\colon\A\to\A$ be an additive, surjective map that preserves singularity in one direction. Then $\Phi$ maps rank-one operators to rank-one operators or to zero.
\end{lemma}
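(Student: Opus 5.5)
The plan is to argue by contradiction, in the spirit of Fo\v{s}ner and \v{S}emrl~\cite{FosnerSemrl}. Suppose $R\in\A$ has rank one, so exactly one block of $R$ is nonzero and that block has rank one, while $\Phi(R)$ has rank $r\ge 2$. We will produce an element $C\in\A$ such that $C+pR$ is singular for every $p\in\mathbb N$, but $\Phi(C)+p\Phi(R)$ is invertible for all but finitely many $p\in\mathbb N$. Additivity gives $\Phi(C+pR)=\Phi(C)+p\Phi(R)$ for $p\in\mathbb N$, so this contradicts the assumption that singular elements go to singular elements.

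\emph{Step 1: normal form and sufficient condition.} Compose $\Phi$ with $X\mapsto PXQ$ for suitable invertible block-diagonal $P,Q$. This preserves singularity, so we may assume every block of $\Phi(R)$ is of the form $I_{r_j}\oplus 0$, with $\sum_j r_j=r\ge 2$. By Lemma~\ref{finite_many_singular}, applied blockwise, any $Y$ whose compressions to the complementary corners $0\oplus M_{n_j-r_j}(\FF)$ are invertible satisfies the following: $Y+p\Phi(R)$ is invertible for all but finitely many $p$. If $r_j=n_j$, no condition is imposed on the $j$-th block beyond choosing it suitably; for instance, take it zero.

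\emph{Step 2: the preimage side.} If $C+pR$ is singular for every $p$, we get the desired contradiction. This holds automatically when $C$ has a block that stays singular after adding $pR$. That is the case when $C$ is singular in a block where $R$ vanishes, or when $C$ has rank at most $n_i-2$ in the block $i$ carrying $R$, since a rank-one perturbation cannot restore invertibility. So it suffices to find $C$ in the set
$$\mathcal S=\{C\in\A:\ C+pR \text{ singular for all } p\}$$
such that $\Phi(C)$ is one of the "good" elements $Y$ from Step~1. Equivalently, $\Phi(\mathcal S)$ must meet the good set $\mathcal G$.

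\emph{Step 3, the main obstacle.} Surjectivity only gives preimages of good $Y$; it does not say these preimages lie in $\mathcal S$. Singularity preservation also runs in the wrong direction for a direct transfer. I would first use additivity to get $\mathbb Q$-linearity of $\Phi$. Then I would exploit the size of the good set: $\mathcal G$ contains every $Y$ with invertible complementary corners, which is a large, ``Zariski-open''-type condition over $\mathbb Q$-lines. The set $\mathcal S$ is a union of big additive pieces, for example all $C$ whose $i$-th block has rank $\le n_i-2$, and all $C$ singular in some block other than the one carrying $R$.

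Every element of $\A$ is a sum of at most two, or a bounded number of, elements of $\mathcal S$. Hence $\A=\mathcal S+\mathcal S$, and by surjectivity $\A=\Phi(\mathcal S)+\Phi(\mathcal S)$. I would then try to show that if $\Phi(\mathcal S)\cap\mathcal G=\emptyset$, then $\Phi(\mathcal S)$ lies in the proper ``determinant-vanishing'' set
$$\Big\{Y:\ \textstyle\prod_j\det\big(\text{complementary corner of }Y_j\big)=0\Big\}.$$
The idea is to use the $\mathbb Q$-linear structure to pass to full lines $C_1+tC_2$ inside $\mathcal S$, for instance pairs of rank-$\le n_i-2$ matrices with common kernel. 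The polynomial identities in $t$ would then force this containment on the $\mathbb Q$-span of $\Phi(\mathcal S)$, which is all of $\A$. That is absurd, since $\mathcal G\neq\emptyset$.

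This polynomial-in-$t$ bookkeeping is the delicate part, especially for blocks with $n_i=1$. If a cleaner route is needed, I would instead try to choose explicitly $C\in\mathcal S$ together with a rank-one perturbation pattern tailored to Step~1.
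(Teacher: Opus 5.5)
Your Steps~1--2 reproduce the paper's setup. After normalizing $\Phi(R)$ to $\bigoplus_j(I_{r_j}\oplus 0)$, Lemma~\ref{finite_many_singular} shows that $\Phi(C)\notin\mathcal G$ for every $C\in\mathcal S$. Here $\mathcal G$ is the set of elements whose compression to the complementary corners is invertible; that compression is a square matrix of size $N-r$, with $N=n_1+\dots+n_k$.

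The gap is Step~3, which is the entire content of the lemma, and the mechanism you sketch cannot close it. Let $Z=\A\setminus\mathcal G$ be the zero set of the determinant of that compression. The facts $\Phi(\mathcal S)\subseteq Z$ and $\A=\Phi(\mathcal S)+\Phi(\mathcal S)$ yield nothing, because $Z$ is not closed under addition. Likewise, one-variable polynomial identities along lines $\Phi(C_1)+t\Phi(C_2)$ do not carry over to the $\mathbb Q$-span of $\Phi(\mathcal S)$.

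Decisively, your Step~3 never uses $r\ge 2$, and without it the conclusion is false. Take $\Phi=\mathrm{id}$ on $M_N(\FF)$ with $N\ge 3$ and $R=E_{11}$, so $r=1$. Then $\det(C+pE_{11})=\det C+p\det C'$, where $C'$ is the lower-right $(N-1)\times(N-1)$ corner. Hence $\mathcal S=\{C:\det C=\det C'=0\}\subseteq Z$. This $\mathcal S$ contains every matrix of rank at most $N-2$, hence lines $C_1+tC_2$ with a common kernel and an additive generating set of $\A$. Yet $\Phi(\A)=\A\not\subseteq Z$.

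The missing idea is a degree count, and this is exactly where $r\ge 2$ enters. It is the Fo\v{s}ner--\v{S}emrl lemma \cite[Lemma 2.1]{FosnerSemrl} that the paper applies.

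Restrict attention to elements of total rank at most $N-2$. They lie in $\mathcal S$, since $\rk(C+pR)\le N-1$. Given any $T\in\A$, write $T=T_1+\dots+T_m$ with rank-one $T_j$. Let $g(t_1,\dots,t_m)$ be the determinant of the compression of $\sum_j t_j\Phi(T_j)$ to the complementary corners. It is a homogeneous polynomial of degree $N-r\le N-2$, so each of its monomials involves at most $N-2$ variables.

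Now take $J\subseteq\{1,\dots,m\}$ with $|J|\le N-2$ and rationals $q_j$. The element $\sum_{j\in J}q_jT_j$ has rank at most $N-2$. So $\mathbb Q$-linearity of $\Phi$ and your Steps~1--2 show that $g$ vanishes at every rational point supported in $J$. Since $\mathbb Q$ is infinite, every monomial supported in $J$ has zero coefficient. Hence $g\equiv 0$, so $\Phi(T)\in Z$ for every $T$. This contradicts surjectivity, since $\mathcal G\neq\emptyset$.

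The paper does this in two stages: it handles $\rk T\le N-2$ directly, then ranks $N-1$ and $N$ via that lemma.
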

\begin{proof}
    Suppose, to the contrary, that there exists a rank-one matrix in $\A$ whose $\Phi$-image is a matrix of rank $\ell$ ($2\leq \ell<N:=n_1+n_2+\dots+n_k$) in $\A$. 
    By pre- and post- composing  $\Phi$ with suitable transformations of the form $A \mapsto PAQ$, where $P$ and $Q$ are invertible block-diagonal matrices (such map preserves singularity), we may assume that $$\Phi(E_{11})=\bigoplus_{i=1}^{k}\left(I_{n_i'}\oplus 0_{n_i-n_i'}\right),$$
    where $n_i'$ can be zero, with at least one $n_i'\ge1$.
    Define $F=I-\Phi(E_{11})$ and a (compression) subspace $\mathcal{V}=F\A F|_{R(F)}$; notice that
\begin{equation}\label{eq:V}
\cV=\bigoplus_{n'_i\ge1} M_{n'_i}(\FF)\subseteq M_{N-\ell}(\FF)\subseteq M_{N-2}(\FF).
\end{equation}

Let $T$ be an arbitrary element in $\A$. We will show that the compression of $\Phi(T)$ on $\mathcal{V}$ is singular, which contradicts the surjectivity of $\Phi$.

If $\rk(T)\leq N-2$, then $pE_{11}+T$ is singular for every $p\in\mathbb{N}$ (by rank argument). By additivity and singularity preserving, $p\Phi(E_{11})+\Phi(T)$ is also singular for every $p\in\mathbb{N}$. Assume that $\Phi(T)$ is invertible in $\mathcal{V}$. Regarding the blocks of $\A$ we have three possibilities: $$n_i'=0, \quad n_i'=n_i,\ \hbox{ and }\quad 0<n_i'<n_i.$$

In a block with $n_i'=0$ the matrix $\Phi(pE_{11}+T)|_{M_{n_i}(\FF)}=pI_{n_i} + \Phi(T)|_{M_{n_i}(\FF)}$ is singular if and only if $-p$ is an eigenvalue of $ \Phi(T )|_{M_{n_i}(\FF)}$, and there are at most finitely many such integers $p$.

A block with $n_i'=n_i$ belongs completely to $\cV$, and by the assumption, $\Phi(pE_{11}+T)|_{M_{n_i}(\FF)}=0_{n_i}+\Phi(T)|_{M_{n_i}(\FF)}$ is invertible there for each integer $p$.

Finally in a block with $0<n_i'<n_i$ we get that $\Phi(T)$ is invertible except for at most finitely many values $p\in\mathbb{N}$ by Lemma \ref{finite_many_singular}.

Combined, this shows that, except for at most finitely many integers $p$, all blocks of a singular matrix $\Phi(p E_{11}+T)$ are invertible, a contradiciton. Indeed, the compression of $\Phi(T)$ to $\cV$ must be singular whenever $\rank T\le N-2$.

Next, if $\rk(T)=N-1$, write $T$ as a sum of $N-1$ rank-one matrices $T_1, \dots,T_{N-1}$ from $\A$. Then, by the preceding result, for any sum of $N-2$ or fewer elements of the set  $\{T_1,T_2,\dots,T_{N-1}\}$, its $\Phi$-image is singular in $\mathcal{V}$. By \eqref{eq:V} we may consider $\mathcal{V}$ to be embedded into $M_{N-2}(\FF)$, and  applying \cite[Lemma 2.1]{FosnerSemrl} (for $m=N-1>n=N-2$) we get $\Phi(T)$ is singular in $\mathcal{V}$. 

If $\rk(T)=N$  we write $T$ as a sum of $N$ rank-one matrices $T_1,  \dots, T_N\in\A$, and apply the previous arguments to show that $\Phi(T)$ is again singular in $\cV$.
\end{proof}

\begin{lemma}\label{blocks_to_blocks}
    Let $\Phi\colon \A\to\A$ be an additive, surjective map that preserves singularity in one direction. Then $\Phi$ maps each block $M_{n_i}(\FF)$ onto some block $M_{n_j}(\FF)$. 
\end{lemma}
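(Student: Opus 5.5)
We combine Lemma~\ref{main_singular} with the observation that an element of $\A$ is singular if and only if at least one of its blocks is singular. The plan is first to show that $\Phi$ is injective on rank-one elements, then that the image of a whole block $M_{n_i}(\FF)$ lies in a single block, and finally to get surjectivity onto that block by a dimension (rank) count.

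\textbf{Step 1: no rank-one element is annihilated.} Suppose $\Phi(R)=0$ for some rank-one $R$ lying in the block $M_{n_i}(\FF)$. Take any invertible $T\in\A$ and write it as a sum of $N$ rank-one elements $T=T_1+\dots+T_N$, each living in one block. The rank-one part in block $i$ can be chosen as $T_1=R$, since $R$ extends to a rank-one decomposition of an invertible $n_i\times n_i$ matrix. Consider $S=T-R$, which is singular. Then $\Phi(T)=\Phi(S)+\Phi(R)=\Phi(S)$ is singular. Every invertible matrix is then mapped to a singular one, and by additivity all of $\A$ is mapped into singular elements (every matrix is a sum of two invertibles). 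This contradicts surjectivity, since $I$ must be attained. Hence by Lemma~\ref{main_singular} rank-one elements go exactly to rank-one elements. Note that these rank-one images lie in a single block, because rank one in $\A$ forces the element to sit inside one block.

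\textbf{Step 2: a connectivity argument for $M_{n_i}(\FF)$.} Take two rank-one elements $R,R'$ in the same block $M_{n_i}(\FF)$ which share a column space or a row space. Then $R+R'$ has rank at most one, so $\Phi(R)+\Phi(R')$ is rank at most one. Two rank-one matrices in different blocks sum to rank two, so $\Phi(R)$ and $\Phi(R')$ lie in the same block. The relation ``share row or column space'' connects all rank-one matrices of $M_{n_i}(\FF)$: $xy^T$ connects to $xz^T$, which connects to $wz^T$. Since rank-ones span the block additively, there is an index $j=\pi(i)$ with $\Phi(M_{n_i}(\FF))\subseteq M_{n_j}(\FF)$.

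\textbf{Step 3: surjectivity onto blocks.} By surjectivity, $\A=\sum_i \Phi(M_{n_i}(\FF))$, so every block $M_{n_j}(\FF)$ equals the sum of the images $\Phi(M_{n_i}(\FF))$ over those $i$ with $\pi(i)=j$. The main obstacle is to rule out several blocks mapping into one block, say $\pi(i)=\pi(i')=j$ with $i\ne i'$, since then some other block is missed. I expect a pigeonhole argument: if $\pi$ is not injective, then it is not surjective, contradicting that the images cover $\A$. So $\pi$ is a permutation and $\Phi(M_{n_i}(\FF))=M_{n_{\pi(i)}}(\FF)$.

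To see that $\pi$ respects dimensions, I would use singularity preservation once more. Pick $A\in M_{n_i}(\FF)$ singular inside its block, and complete it by the identities of the other blocks to an element $A\oplus I$ of $\A$. This element is singular, and its image is $\Phi(A)$ in block $\pi(i)$ plus invertible-ish contributions elsewhere. The cleaner route, which I would pursue first, is rank-one nonincreasing: the restriction $M_{n_i}(\FF)\to M_{n_{\pi(i)}}(\FF)$ is an additive surjection sending rank-one to rank-one. Injectivity of $\pi$, together with the totals $\sum n_i^2$ matching under the permutation, then forces $n_i=n_{\pi(i)}$, with the details deferred to the later structural part.
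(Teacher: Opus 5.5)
There is a genuine gap in Step 1, and Step 2 depends on it. You claim that any invertible $T$ can be decomposed into rank-one summands with $R$ as one of them, but this is false. If $R=x\otimes y$ sits in block $i$ and $T_i$ is the $i$-th block of $T$, then by the matrix determinant lemma $T-R$ is singular if and only if $y^\ast T_i^{-1}x=1$. So your argument only shows that invertible $T$ satisfying this single condition have singular image, not that every invertible element does.

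In fact the statement of Step 1 is false in general. On $\A=\CC\oplus M_2(\CC)$ take $\Phi(a,A)=(f(a),A)$, where $f\colon\CC\to\CC$ is a $\mathbb{Q}$-linear surjection with nonzero kernel (built from a Hamel basis). This $\Phi$ is additive, surjective and preserves singularity, yet it annihilates the rank-one elements $(a,0)$ with $0\neq a\in\ker f$. For blocks of size at least two no rank-one element is killed, but that only follows later from the semilinear form in Lim's theorem, not at this stage.

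Without Step 1, your chain $x\otimes y\to x\otimes z\to w\otimes z$ in Step 2 can pass through an element with zero image. Then ``$\Phi(R)+\Phi(R')$ has rank at most one'' says nothing about which block contains $\Phi(R')$. The missing idea is the one the paper uses:
\begin{enumerate}
\item Fix $x\otimes y$ in block $i$ with $\Phi(x\otimes y)\neq 0$ lying in block $j$.
\item Your shared-row/column argument shows that every nonzero $\Phi(x\otimes w)$ and every nonzero $\Phi(z\otimes y)$ lies in block $j$.
\item For an arbitrary $z\otimes w$, chain through $x\otimes w$ or $z\otimes y$ if either has nonzero image.
\item If both images vanish, note that $(x+z)\otimes(y+w)$ has rank at most one while its image is $\Phi(x\otimes y)+\Phi(z\otimes w)$. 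This forces $\Phi(z\otimes w)$ into block $j$.
\end{enumerate}
A block whose rank-one elements are all annihilated maps to $0$. Your pigeonhole argument in Step 3, which matches the paper's, rules this out and gives $\Phi(M_{n_i}(\FF))=M_{n_{\pi(i)}}(\FF)$.

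Finally, your last paragraph on $n_i=n_{\pi(i)}$ is not part of this lemma, and the proposed count does not work. The totals $\sum n_i^2$ agree for every permutation, and a merely additive map does not control $\FF$-dimension. The paper gets the size match only later, from the semilinear form of each $\Phi_i$.
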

\begin{proof}
    Since $\Phi$ is an additive surjection, there exists a rank-one element $R\in\A$ with $\Phi(R)\neq0$. By Lemma \ref{main_singular} $\rk\,\Phi(R)=1$, so there exists $j\in[1,k]$ such that $\Phi(R)\in M_{n_j}(\FF)$. Let $i$ be the index of the block containing $R$. Denote $$R=x\otimes y\quad \text{\ where\ }\quad x,y\in0\oplus\FF^{n_i}\oplus0;$$  
    (here and throughout, given $x,y\in\FF^n$ we denote by $x\otimes y$ a rank-one matrix, which, in a standard basis, maps $z$ into $(y^\ast z)x$, where $y^\ast$ denotes a conjugated transpose of a column vector $y$).
    Now, if for some $w\in0\oplus\FF^{n_i}\oplus 0$ it holds $0\neq\Phi(x\otimes w)\in M_{n_\ell}(\FF)$ where $n_\ell\neq n_j$, then $x\otimes y+x\otimes w$ is rank-one, however $\Phi(x\otimes y+x\otimes w)$ is rank two, a contradiction. The same argument shows that $\Phi(z\otimes y)\in  M_{n_j}(\FF)$ for all $z\in 0\oplus\FF^{n_i}\oplus0$. 
    
    Now, take an arbitrary rank-one $z\otimes w\in M_{n_i}(\FF)$ with $z,w\in0\oplus\FF^{n_i}\oplus0$. Then, by the above,  
    \begin{equation}\label{eq:xw--zy}
        \Phi(x\otimes w),\Phi(z\otimes y)\in M_{n_j}(\FF).
    \end{equation}
    If at least one of them is nonzero, say $\Phi(x\otimes w)\neq0$, we can repeat the  above arguments, to deduce that also    $\Phi(z\otimes w)\in M_{n_j}(\FF)$. However, if both elements of \eqref{eq:xw--zy} are zero, then consider a rank-one $(x+z)\otimes (y+w)$, which $\Phi$ maps into $\Phi(x\otimes y)+0+0+\Phi(z\otimes w)$ again of rank-one at most. Since $0\neq \Phi(x\otimes y)\in M_{n_i}(\FF)$, we necessarily have that  $\Phi(z\otimes w)\in M_{n_j}(\FF)$. It now follows by the additivity that the whole block $M_{n_i}(\FF)$ is mapped into $M_{n_j}(\FF)$.
    
   Thus, $\Phi$ maps each block $M_{n_i}(\FF)\subseteq\A$ into some block $M_{n_j}(\FF)$. Since there are finitely many blocks, it must permute the blocks and must map $M_{n_i}(\FF)$ onto $M_{n_j}(\FF)$  otherwise it would not be surjective.  
\end{proof}

\begin{proof}[Proof of Theorem \ref{singularity_preservers}]
    
Using the Lemma \ref{blocks_to_blocks}, we can denote 
$$\Phi_i\colon =\Phi|_{M_{n_i}(\FF)}\colon M_{n_i}(\FF)\to M_{n_j}(\FF).$$ Now we are in a position to apply the result \cite[Theorem 2; Corollary 3]{Lim1} to $D=D_1=\FF$ and $m=n=n_i, p=q=n_j$. Cases (i) and (ii) of \cite[Corollary 3]{Lim1}  are excluded by surjectivity if $n_j\ge2$ (see also \cite{Lim2} for more results in this direction). Therefore, when $n_j\ge2$, we conclude that the only remaining  possibilities are 

(iii) $\Phi_i(A)=P_iA^{\sigma_i}Q_i$  and 

(iv) $\Phi_i(A)=(P_iA^{\sigma_i} Q_i)^T$,

\noindent
where $P_i\in M_{n_j,n_i}(\FF)$ is surjective and $Q_i\in M_{n_i,n_j}(\FF)$ is injective (by surjectivity of $\Phi_i$) and $\sigma_i\colon \FF\to\FF$ is a field isomorphism. 

We claim $\Phi_i$ maps the block $M_{n_i}(\FF)$ to a block of the same size. Indeed, if $n_i\ge2$, we observe that $\dim\Phi_{i}(M_{n_i}(\FF))\le\dim M_{n_i}(\FF)$ since $\Phi_i$ is semilinear. Consequently, if $\A$ contains a block of size greater than one, then only a block of maximal possible size in $\A$ can be mapped to a block of maximal size in $\A$. Since we know that $\Phi$ permutes the blocks we can continue these arguments to complete the proof.
\end{proof}

\section{Additive preservers of mutual strong Birkhoff-James orthogonality}

We will apply the results of Theorem \ref{singularity_preservers} in the case $\FF=\C$ to clasify mutual strong BJ orthogonality preservers on finite-dimensional $C^{\ast}$-algebra $\mathfrak{A}$. Recall that they are all of the form \eqref{finite-dimensional}, and recall that $A$ is mutually BJ orthogonal to $B$ if $A\perp^S_{BJ}\, B$ and $B\perp^S_{BJ}\, A$, where the relation of strong BJ orthogonality, $\perp^S_{BJ}$, was defined at \eqref{strong BJ}.

We will frequently use the following three simple observations: (i) if $B\perp^S A$ for an invertible matrix $A$, then $B=0$. Namely, if we put $C=-A^{-1}B$ in relation $\|B+AC\|\ge\|B\|$ we immediately get $B=0$. See also \cite[Lemma 1.2]{ArambasicBJMA2020} and \cite[Proposition 2.5 and 2.6]{Keckic2023JMAA}. Also, (ii) $A\strongp_{BJ} A$ if and only if $A=0$. Finally, (iii)  for a rank-one operator $A$, the relation $A\strongperp_{BJ}\,B$ is equivalent to $A^{\ast}B=0$ (see \cite[Proposition 2.3]{ArambasicStrong}).

\begin{lemma}\label{singula_to_singular}
    Let $\mathfrak{A}$ be a finite-dimensional $C^{\ast}$-algebra not isomorphic to $\mathbb{C}\oplus\mathbb{C}$. Furthermore, let $\Phi\colon\mathfrak{A}\to\mathfrak{A}$ be an additive map that preserves a mutual strong BJ orthogonality in one direction. Then $\Phi$ preserves the singularity.
\end{lemma}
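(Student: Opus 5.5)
The plan is to characterise singularity through the orthogonality relation and then argue by contradiction. For $X\in\mathfrak{A}$ let
$$K(X)=\{B\in\mathfrak{A}:\; X\strongperp_{BJ} B\}.$$
The proof would follow these steps.

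\textbf{Step 1: singularity in terms of $K(X)$.} I claim that $X$ is singular if and only if $K(X)\neq\{0\}$.
\begin{itemize}
\item If $X$ is invertible, observation (i) preceding the lemma gives $K(X)=\{0\}$.
\item If $X$ is singular, some block $X_i$ is singular. Pick a unit vector $x\in 0\oplus\C^{n_i}\oplus 0$ with $X^\ast x=0$. By observation (iii), every rank-one $B=x\otimes y$ with $y$ in the same block satisfies $B^\ast X=0$, hence $B\strongperp_{BJ} X$. The relation is symmetric, so $B\in K(X)$.
\end{itemize}
Two further facts about $K(X)$:
\begin{itemize}
\item Strong BJ orthogonality is invariant under multiplying either argument by a nonzero complex scalar, since $c$ may absorb it. Hence $K(X)$ is closed under $\C$-scalars.
\item Moreover $K(X)$ contains the whole complex subspace $\{x\otimes y\}_y$ for every $x\perp R(X)$.
\end{itemize}

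\textbf{Step 2: consequences of assuming the conclusion fails.} Suppose $X$ is singular but $\Phi(X)$ is invertible. For every $B\in K(X)$ we have $\Phi(X)\strongperp_{BJ}\Phi(B)$, and since $\Phi(X)$ is invertible this forces $\Phi(B)=0$. So $\Phi$ vanishes on the additive group generated by $K(X)$. Additivity then gives $\Phi(X+B)=\Phi(X)$, which is still invertible, for every $B$ in that group. Also $X+B$ is still singular whenever, for instance, $B$ is a sum of rank-ones $x\otimes y$ with $x$ orthogonal to the range of $X$ in a block where that range is not everything.

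So the set
$$\mathcal U=\{X \text{ singular}: \Phi(X)\ \text{invertible}\}$$
has two properties: $X\in\mathcal U$ implies $X+K(X)\subseteq\mathcal U$, and $K(X)\subseteq\ker\Phi$. The goal is to move inside $\mathcal U$ by such steps until we reach an element $Y$ with $Y\in\operatorname{span}\bigcup_{Z\in\mathcal U}K(Z)$. Then $\Phi(Y)=0$, contradicting invertibility of $\Phi(Y)=\Phi(X)$. The cleanest target is $Y=0$, or more realistically a decomposition $X=\sum B_j$ with each $B_j\in K(Z_j)$ for some $Z_j\in\mathcal U$.

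\textbf{Step 3: the combinatorial core (main obstacle).} The intended argument is:
\begin{itemize}
\item In a block $M_{n_i}$ with $n_i\ge 3$, starting from a singular $X_i$, add rank-ones $x\otimes y$ ($x\perp R(X_i)$) to rotate the range of $X_i$ while keeping it a proper subspace. This should show that every rank-one $u\otimes v$ in that block lies in some $K(Z)$ with $Z\in\mathcal U$. Then the whole block, and by the same reasoning all of $X$, lies in $\ker\Phi$.
\item In blocks of size $2$ or $1$, where a single step may make the block invertible, use the other blocks instead. Singularity of $X$ only needs one singular block, so I can transfer the singular block elsewhere: first add elements of $K(X)$ living in another block to make that block singular in a controlled way, then freely modify the original block.
\item For $\mathfrak{A}=\C$ or $M_2(\C)$ the check is direct. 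In $M_2$, for rank-one $X=u\otimes v$, first add $x\otimes y$ with $x\perp u$, $y\parallel v$, which keeps rank one. This gives rank-one elements of $\mathcal U$ with arbitrary range $u'$ not orthogonal to $u$, and then arbitrary range after two steps. So every rank-one $w\otimes y$ lies in some $K(Z)$, hence in $\ker\Phi$, and therefore $\Phi\equiv0$. This contradicts invertibility of $\Phi(X)$.
\end{itemize}

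\textbf{Why $\C\oplus\C$ is excluded.} There $K((1,0))=\{(0,c)\}$, and the moves never leave the coordinate axes. An additive map swapping the roles of the axes can send $(1,0)$ to an invertible element, so the lemma genuinely fails.

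I expect the main obstacle to be Step 3 in general: proving that the reachable set generates enough of $\mathfrak{A}$ to contain $X$ in all the mixed cases. This is hardest when all blocks are small, for example $\C\oplus\C\oplus\C$ or $\C\oplus M_2$. There I would first try an explicit two- or three-step chain that transfers a zero coordinate from one block to another.
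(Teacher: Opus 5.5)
\paragraph{Verdict.} Steps~1 and~2 are correct but routine. All the content of the lemma sits in Step~3, which you only describe as an ``intended argument'' and yourself flag as the main obstacle. So the proposal does not yet prove the statement.

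\paragraph{Where the Step~3 sketch fails.} The mechanism you propose for small blocks cannot work as stated: $K(X)$ contains no nonzero element supported in a block where $X$ is invertible. Indeed, suppose $B$ lives in block $j$ and $X_j$ is invertible. Every unit vector $w$ with $\|Bw\|=\|B\|$ then lies in block $j$, and $B\strongp_{BJ}X$ (Theorem~\ref{THEOREM}) forces $X_j^\ast B_jw=0$, i.e.\ $B=0$. So you cannot ``add elements of $K(X)$ living in another block'' to make that block singular. An element of $K(X)$ can touch an invertible block only if its norm is attained in a block where $X$ is singular. Exploiting this needs a norm-domination device that your plan does not contain.

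Rank-one moves alone genuinely get stuck. For $X=(0,a,b)\in\C^3$ with $ab\neq0$, the only rank-one elements of $K(X)$ are $(c,0,0)$, and adding any nonzero one makes the element invertible. (Also, $X+K(X)\subseteq\mathcal U$ is false as written: $e_1\otimes e_1+e_2\otimes e_2=I$ in $M_2(\C)$.)

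\paragraph{The missing idea.} The paper uses a self-orthogonality trick. Suppose $T$ is singular and $\Phi(T)$ is invertible. Exhibit two perturbations of $T$ by elements of $\ker\Phi$ that are mutually strongly BJ orthogonal. Additivity then gives $\Phi(T)\strongperp_{BJ}\Phi(T)$, hence $\Phi(T)=0$.

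To set this up, embed $\mathfrak A$ in $M_N(\C)$ and take a blockwise SVD $T=\sum\sigma_ix_i\otimes y_i$, completed to orthonormal bases with every $x_i\otimes y_i\in\mathfrak A$.
\begin{itemize}
\item If $\rk T\le N-2$: then $x_{N-1}\otimes y_{N-1}$ and $x_N\otimes y_N$ lie in $K(T)\subseteq\ker\Phi$. For an integer $p>\sigma_1$ one has $T+p\,x_{N-1}\otimes y_{N-1}\strongperp_{BJ}T+p\,x_N\otimes y_N$. This holds because each side attains its norm at $y_{N-1}$, resp.\ $y_N$, with image orthogonal to the range of the other. In your language this is a one-step chain: $Z=T+p\,x_{N-1}\otimes y_{N-1}\in\mathcal U$ and $T+p\,x_N\otimes y_N\in K(Z)$.
\item If $\rk T=N-1$: use the rank-two elements $p\,x_N\otimes y_N\pm\sigma_ix_i\otimes y_i$, with $2\le i\le N-1$ and an integer $p\ge\sigma_i$. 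They lie in $K(T)$: $T$ attains its norm at $y_1$ and $x_1\perp x_i,x_N$, while the element attains its norm at $y_N$ and $T^\ast x_N=0$. Subtracting the two images gives $\Phi(\sigma_ix_i\otimes y_i)=0$. Hence $\Phi(T)=\Phi(\sigma_1x_1\otimes y_1)$, whose argument has rank one and so falls under the first case because $N\ge3$.
\end{itemize}

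This treats $\C^k$, $\C\oplus M_2(\C)$ and every algebra with $N\ge3$ uniformly, with no block-by-block transfer. Only $M_2(\C)$ needs separate treatment, and there your rank-one rotation argument is correct.
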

\begin{proof}
    If $\mathfrak{A}=\C$, the statement of Lemma \ref{singula_to_singular} is trivially valid. Indeed, the only singular element in $\mathbb{C}$ is 0, and $\Phi(0)=0$. 

    If $\mathfrak{A}=M_2(\C)$, suppose, to the contrary, that there exists a nonzero singular element (and thus of rank-one, which we denote by $x\otimes y$) that is mapped into an invertible element. Applying $\Phi$ to the relation 
    $$x\otimes y\strongperp_{BJ}\,x'\otimes z$$ (here and throughout, $x'$ denotes any vector perpendicular to $x$, with $\|x'\|=\|x\|$) we obtain $\Phi(x'\otimes z)=0$ for an arbitrary $z\in\C^2$, because $\Phi(x\otimes y)$ is invertible. Then by Theorem \ref{THEOREM} 
    $$(x+x')\otimes y\strongperp_{BJ}\,(x- x')\otimes y,$$
    so applying additive $\Phi$ on both sides, we get
    $$\Phi(x\otimes y)\strongperp_{BJ}\,\Phi(x\otimes y),$$
    which is impossible by invertibility of $\Phi(x\otimes y)$ (in fact, it would imply $\Phi(x \otimes y)=0$). 
    
    In all other cases, we can embed $\mathfrak{A}$ into $M_N(\mathbb{C})$ where $N=n_1+n_2+\dots+n_k$ and show that $\Phi(T)$ is singular whenever $T$ is singular. First, suppose that $\rk\,T\leq N-2$. Let $$T=\sigma_1x_1\otimes y_1+\sigma_2x_2\otimes y_2+\dots+\sigma_{N-2}x_{N-2}\otimes y_{N-2},\ \sigma_1\geq\sigma_2\geq\dots\geq\sigma_{N-2}\geq0,$$
    be its  singular value decomposition (SVD for short) and let us assume the opposite, that $\Phi(T)$ is invertible. We extend the set $\{x_1,\dots,x_{N-2}\}$ to an orthonormal basis $\{x_1,\dots,x_{N-2},x_{N-1},x_N\}$ and similarly with $y_i$'s. Then, by Theorem \ref{THEOREM}, we know that 
    $$T\strongperp_{BJ}\,x_i\otimes y_i,\ \text{where\ }i\in\{N-1,N\},$$
    and applying $\Phi$ on both sides, we get $\Phi(T)\strongperp_{BJ}\,\Phi(x_i\otimes y_i)$ and therefore  $\Phi(x_i\otimes y_i)=0,\ i\in\{N-1,N\}$ because $\Phi(T)$ is invertible. Also, again by Theorem~\ref{THEOREM}
    $$T+p\,x_{N-1}\otimes y_{N-1}\strongperp_{BJ}\, T+p\, x_N\otimes y_N,$$
    for $p\in\mathbb{N}$ greater than $\sigma_1$. Applying additive $\Phi$  we get $\Phi(T)\strongperp_{BJ}\,\Phi(T)$, so $\Phi(T)=0$, which contradicts the assumed invertibility of $\Phi(T)$. 

    Finally, suppose $\rk T=n-1$. Again, let $$T=\sigma_1x_1\otimes y_1+\sigma_2x_2\otimes y_2+\dots+\sigma_{N-1}x_{N-1}\otimes y_{N-1},\ \sigma_1\geq\sigma_2\geq\dots\geq\sigma_{N-1}>0,$$
    be its SVD and let us assume the opposite, that $\Phi(T)$ is invertible. Then, by Theorem \ref{THEOREM} 
    $$T\strongperp_{BJ}\, x_N\otimes y_N \pm x\otimes y,$$
    where $x\perp \{x_N,x_1\}$ and $y\perp y_N$ are both nonzero (they exist because $N\ge3$). From the invertibility of $\Phi(T)$, we conclude that $\Phi(x_N\otimes y_N \pm x\otimes y)=0$ and then from the additivity $\Phi(x_N\otimes y_N)=0$ and $\Phi(x\otimes y)=0$. In particular, by taking $x=\sigma_ix_i$ and $y=y_i$ for $2\le i\le N-1$, we conclude that $\Phi(\sigma_ix_i\otimes y_i)=0$. So  $$\Phi(\sigma_1x_1\otimes y_1)=\sum\limits_{i=1}^{N-1}\Phi(\sigma_i x_i\otimes y_i)=\Phi(T)$$ is invertible, which is impossible by the previous part of our proof.
\end{proof}

As we show in the next example, Lemma \ref{singula_to_singular} does not hold in case $\mathfrak{A}=\C\oplus\C$. 
\begin{example}\label{exam:32}
    Let us define $\Phi\colon\C\oplus\C\to\C\oplus\C$ as follows: $$\Phi(\lambda,\mu)=\lambda(1,1).$$ 
   By definition $(\lambda_1,\mu_1)\strongperp_{BJ}\,(\lambda_2,\mu_2)$ for two nonzero elements in $\mathfrak{A}$ if and only if $\lambda_1=\mu_2=0$ or $\mu_1=\lambda_2=0$. In both cases, one element is of the form $(0,\mu)$, so it is mapped to $(0,0)$ by $\Phi$. So, $\Phi$ preserves mutual strong BJ orthogonality, however, it maps a singular element $(1,0)$ to invertible $(1,1)$. 
\end{example}

Now, we are in a position to apply Theorem \ref{singularity_preservers} by which
$$\Phi\colon A_i\mapsto P_i\pi(A_i)^{\sigma_i} Q_i\quad \hbox{ or }\quad \Phi\colon A_i\mapsto P_i\pi(A_i^T)^{\sigma_i} Q_i;\qquad  A_i\in M_{n_i}(\FF).$$

The permutation of blocks $\pi$ is clearly a linear isometry, hence preserves the mutual  strong BJ orthogonality. By composing $\Phi$ with its inverse $\pi^{-1}$, we can and will assume that $\pi$ is the identity permutation. Let us now eliminate the second option (namely, transposition) on blocks of dimension at least two.

\begin{lemma}\label{tranpose_impossible}
    The map $\Phi_i\colon M_{n_i}(\C)\to M_{n_i}(\C)$ for $n_i\ge2$, defined by $\Phi_i(A)=P_i\left(A^T\right)^{\sigma_i}Q_i$ does not preserve the mutual strong BJ orthogonality in one direction.
\end{lemma}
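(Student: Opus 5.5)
We argue by contradiction with one explicit rank-one pair, using observation (iii): for a rank-one $A$, $A\strongperp_{BJ}\,B$ holds exactly when $A^{\ast}B=0$. The aim is a pair $A,B\in M_{n_i}(\C)$ of rank-one matrices with $A^\ast B=0$ whose images $\Phi_i(A),\Phi_i(B)$ are again rank-one but satisfy $\Phi_i(A)^\ast\Phi_i(B)\neq 0$. Then $A\strongperp_{BJ}\,B$ while $\Phi_i(A)$ and $\Phi_i(B)$ are not mutually strongly BJ orthogonal.

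First, fix the data. Theorem~\ref{singularity_preservers} gives invertible $P_i,Q_i\in M_{n_i}(\C)$, and $\sigma_i$ is a field endomorphism, so $\sigma_i(0)=0$ and $\sigma_i(1)=1$. Hence $(A^T)^{\sigma_i}=A^T$ for every matrix unit, so $\Phi_i(E_{rs})=P_iE_{sr}Q_i$ for all $r,s$. This avoids any issue with how $\sigma_i$ acts on general scalars.

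Next, take $A=E_{11}$ and $B=E_{21}$, which is possible because $n_i\ge2$. Then $A^\ast B=E_{11}E_{21}=0$, so by (iii) we have $A\strongperp_{BJ}\,B$. Their images are $\Phi_i(A)=P_iE_{11}Q_i$ and $\Phi_i(B)=P_iE_{12}Q_i$, both of rank one since $P_i,Q_i$ are invertible. Applying (iii) to the rank-one $\Phi_i(A)$, preservation would force
$$0=\Phi_i(A)^\ast\Phi_i(B)=Q_i^\ast E_{11}P_i^\ast P_iE_{12}Q_i .$$
The middle factor satisfies $E_{11}P_i^\ast P_iE_{12}=\langle P_i^\ast P_i e_1,e_1\rangle E_{12}=\|P_ie_1\|^2E_{12}$. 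This is nonzero because $P_i$ is invertible, so $P_ie_1\neq0$. Multiplying on the left by $Q_i^\ast$ and on the right by $Q_i$, both invertible, the product remains nonzero. This contradicts preservation.

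The only delicate point is the bookkeeping: the convention for $x\otimes y$ and the precise form of (iii), meaning which of $A^\ast B$ or $AB^\ast$ vanishes. If the convention turns out to be the other one, we use the transposed pair $A=E_{11}$, $B=E_{12}$ instead. Then $AB^\ast=0$, and under $\Phi_i$ the images become $P_iE_{11}Q_i$ and $P_iE_{21}Q_i$. The relevant product is then $P_iE_{11}Q_iQ_i^\ast E_{12}P_i^\ast$, which is again nonzero by the same computation with $Q_iQ_i^\ast$ in place of $P_i^\ast P_i$.
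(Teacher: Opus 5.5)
Your proof is correct, and it takes a more direct route than the paper's.

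\textbf{The paper's route.} It first applies the relation $E\strongperp_{BJ}(I-E)$ to rank-one projections $E$ with rational entries. This shows that $P_i^\ast P_i$ is a scalar multiple of the identity. It then strips off $P_i$, reducing to the map $A\mapsto (A^T)^{\sigma_i}Q_i$. Only after that does it get a contradiction, from the pair $e_1\otimes e_1\strongperp_{BJ} e_2\otimes(e_1+e_2)$.

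\textbf{Your route.} You skip the normalization entirely. Note that $\Phi_i(E_{11})=(P_ie_1)(Q_i^\ast e_1)^\ast$ and $\Phi_i(E_{21})=(P_ie_1)(Q_i^\ast e_2)^\ast$ are nonzero rank-one operators with the same range $\C P_ie_1$. So $\Phi_i(E_{11})^\ast\Phi_i(E_{21})\neq 0$ for any invertible $P_i,Q_i$. This also shows clearly why transposition fails. $E_{11}$ and $E_{21}$ have orthogonal ranges but a common row space. Transposition turns this into a common range, and the multiplications by $P_i$ and $Q_i$ cannot separate that range again.

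\textbf{What each buys.} The paper's detour gives the extra fact that $P_i$ is a scalar multiple of a unitary. This parallels Lemma~\ref{P_and_sigma}, but it is not needed for the present lemma. Your route also avoids the paper's normalization step, which is written there as $P_i^\ast P_i\Phi_i$ where $P_i^{-1}\Phi_i$ is meant.

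\textbf{The hedge.} Your last paragraph about conventions is unnecessary. Observation (iii) is exactly $A^\ast B=0$, equivalently $B^\ast A=0$, which is what Theorem~\ref{THEOREM} gives for rank-one $A$. Your tensor notation never enters the argument, so your first computation already settles the lemma.
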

\begin{proof}
    This is essentially proven in \cite[Lemma 2.6]{StrongMutualPreservers}, but we reprove it for the sake of convinience. Let $E$ be any rank-one orthogonal projection with rational entries. Then $(E^{T})^{\sigma_i}=E$. Also,  $E(I-E)=0$, so $E\strongperp_{BJ}\,(I-E)$, which implies that $P_i(E^{T})^{\sigma_i}Q_i\strongperp_{BJ}\, P_i((I-E)^T)^{\sigma_i}Q_i$. This  simplifies into 
    $$P_iEQ_i\strongperp_{BJ}\, P_i(I-E)Q_i.$$ Since $P_iEQ_i$ is a rank-one, we conclude $(P_iEQ_i)^{\ast}P_i(I-E)Q_i=0$, that is,  $$Q_i^{\ast}EP_i^{\ast}P_i(I-E)Q_i=0.$$
    Since $Q_i$ is invertible, $EP_i^{\ast}P_i(I-E)=0$, that is, 
    $$EP_i^{\ast}P_i=EP_i^{\ast}P_iE$$ for all rank-one
    orthogonal projections $E$ with rational coeficients. Insert $E=e_i\otimes e_i$, where $e_1,e_2,\dots,e_{n_i}\in\C^{n_i}$ is a standard orthonormal basis of $\CC^{n_i}$ to deduce that $P_i^{\ast}P_i$ is a diagonal matrix. Insert also $E=\frac{1}{2}(e_1+e_k)\otimes(e_1+e_k)$ to deduce that $P_i^{\ast}P_i$ is a scalar multiple of identity. This implies that the map $$\Psi_i=P_i^\ast P_i\Phi_i:A \mapsto (A^T)^{\sigma_i}Q_i$$
    also preserves a mutual strong BJ orthogonality.
    \noindent
    To finish the proof, note that     
    $$e_1 \otimes e_1\strongperp_{BJ}\,e_2 \otimes (e_1+e_2)$$
    so applying $\Psi_i$ we get  $$(e_1 \otimes e_1)Q_i\strongperp_{BJ}\,((e_1+e_2) \otimes e_2)Q_i.$$ 
    Since both elements are of rank-one, it follows that $Q_i^{\ast}((e_1+e_2)\otimes e_2 )^{\ast}(e_1 \otimes e_1)Q_i=0$. Then invertibility of $Q_i$ implies that $$e_2 \otimes e_1=((e_1 + e_2) \otimes e_2)^\ast(e_1 \otimes e_1)=0,$$
    which is a contradiction.
\end{proof}

\begin{lemma}\label{P_and_sigma}
    If the map $\Phi_i\colon M_{n_i}(\C)\to M_{n_i}(\C),n_i\ge2$ defined by $\Phi_i(A)=P_iA^{\sigma_i} Q_i$ preserves the mutual strong BJ orthogonality in one direction, then $\sigma_i(\lambda)=\lambda$ for all $\lambda\in\C$ or $\sigma_i(\lambda)=\overline{\lambda}$ for all $\lambda\in\C$. Moreover,  $P_i$ is a scalar multiple of a unitary operator. 
\end{lemma}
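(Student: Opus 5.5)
First I will show that $P_i$ is a scalar multiple of a unitary, using the same device as in Lemma~\ref{tranpose_impossible}. Take any rank-one orthogonal projection $E$ with rational entries. Then $E^{\sigma_i}=E$, since a field endomorphism fixes $\mathbb{Q}$, and $(I-E)^{\sigma_i}=I-E$. From $E\strongperp_{BJ}\,(I-E)$ we get $P_iEQ_i\strongperp_{BJ}\,P_i(I-E)Q_i$. The first element has rank one, so observation (iii) applies. Using invertibility of $Q_i$, this yields $EP_i^\ast P_i(I-E)=0$. Choosing $E=e_j\otimes e_j$ and $E=\tfrac12(e_1+e_k)\otimes(e_1+e_k)$, exactly as before, shows that $P_i^\ast P_i$ is a positive scalar multiple of the identity. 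Hence $P_i=\gamma_i U_i$ with $\gamma_i>0$ and $U_i$ unitary. Left multiplication by $U_i^\ast/\gamma_i$ is a scalar multiple of an isometry and preserves the relation. So I may replace $\Phi_i$ by $\Psi_i(A)=A^{\sigma_i}Q_i$, which still preserves mutual strong BJ orthogonality in one direction.

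Next I will use rank-one relations to constrain $\sigma_i$. For vectors, $(x\otimes y)^{\sigma}=x^\sigma\otimes y^{\sigma'}$, where $\sigma$ acts entrywise and $y^{\sigma'}$ denotes the vector whose conjugated entries are $\sigma(\overline{y_j})$. By (iii), $x\otimes y\strongperp_{BJ}\,z\otimes w$ holds iff $\langle z,x\rangle=0$ (given $y,w\neq0$). Applying $\Psi_i$ gives rank-one elements $x^\sigma\otimes Q_i^\ast y^{\sigma'}$ and $z^\sigma\otimes Q_i^\ast w^{\sigma'}$. Their orthogonality, again by (iii), is equivalent to $\langle z^\sigma,x^\sigma\rangle=0$, because $Q_i^\ast$ is invertible and the second tensor factor is nonzero.

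Hence $\sigma$ (entrywise) maps orthogonal pairs in $\C^{n_i}$ to orthogonal pairs. Take $x=e_1+\lambda e_2$ and $z=-\overline{\lambda}e_1+e_2$, which are orthogonal. The condition gives $-\sigma(\overline\lambda)+\sigma(\lambda)\cdot\overline{\sigma(1)}\cdots$; carefully, $\langle z^\sigma,x^\sigma\rangle=\overline{1}\cdot\sigma(-\overline\lambda)+\overline{\sigma(\lambda)}\cdot 1=0$. So $\sigma(\overline\lambda)=\overline{\sigma(\lambda)}$, meaning $\sigma$ commutes with complex conjugation. Then $\sigma$ maps $\mathbb{R}$ into $\mathbb{R}$. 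An endomorphism of $\mathbb{R}$ preserves squares, hence order, hence is the identity. Consequently $\sigma(a+bi)=a+b\sigma(i)$ with $\sigma(i)=\pm i$, which gives $\sigma=\mathrm{id}$ or $\sigma=$ conjugation.

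The main obstacle is this last step. Wild automorphisms of $\C$ must be ruled out purely from orthogonality, and the reduction to commutation with conjugation has to be done correctly, keeping track of how $\sigma$ acts on the second tensor factor. The well-known fact that the only field endomorphism of $\mathbb{R}$ is the identity then finishes the argument.
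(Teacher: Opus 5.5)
Your argument is correct and essentially the paper's. The decisive step is the same: you apply the map to the range-orthogonal rank-one pair built from $e_1+\lambda e_2$ and $-\overline{\lambda}e_1+e_2$ and obtain $\sigma_i(\overline{\lambda})=\overline{\sigma_i(\lambda)}$.

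There are two minor differences. The paper obtains $P_i^\ast P_i\in\C I$ from that same family of relations with rational $\lambda$ (via a quadratic in $q$), whereas you use the projection device of Lemma~\ref{tranpose_impossible}. The paper also leaves implicit the final deduction that real field endomorphisms are trivial and $\sigma_i(i)=\pm i$, which you spell out.
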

\begin{proof}
    On rank-one operators, mutual strong BJ orthogonality is equivalent to range orthogonality, so for all $\lambda\in\C$ and $\ell\neq j$ 
$$(e_{\ell}+\lambda e_j)\otimes e_{\ell}\strongperp (\bar{\lambda}e_{\ell}-e_j)\otimes e_{\ell}.$$
Applying $\Phi_i$ on both sides gets
$$P_i(e_{\ell}+\sigma_i(\lambda)e_j)\otimes Q_i^\ast e_{\ell}\strongperp P_i(\sigma_i(\bar\lambda)e_{\ell}-e_j)\otimes Q_i^\ast e_{\ell},$$
from where we can conclude 
\begin{equation}\label{upper 2x2}
(e_{\ell}^\ast+\overline{\sigma_i(\lambda)}e_j^\ast)P_i^{\ast}P_i(\sigma_i(\bar\lambda)e_{\ell}-e_j)=0.
\end{equation}
Now, if we put $\lambda=q\in\mathbb{Q}$, knowing that $\sigma_i(q)=q$, we get
\begin{equation}\label{upper 2x2 on rational}
    (e_{\ell}^\ast+qe_j^\ast)P^\ast P(qe_{\ell}-e_j)=0
\end{equation}
for every rational number $q$. Denote elements of positive matrix by $P_i^\ast P_i:=(a_{\ell j})_{1\le \ell,j\le n}$.
From \eqref{upper 2x2 on rational} we have that 
$$\overline{a_{\ell j}}q^2+(a_{\ell\ell}-a_{jj})q-a_{\ell j}=0,$$
so $a_{\ell j}=0$ and $a_{\ell\ell}=a_{jj}$. This proves that $P_i^\ast P_i$ is a scalar multiple of identity and returning to the equality \eqref{upper 2x2}, we conclude that 
$$\sigma_i(\overline{\lambda})=\overline{\sigma_i(\lambda)}.$$
So, $\sigma_i$ is the identity or conjugation map. 
\end{proof}

\begin{lemma}\label{lemma:dagger}
Let $\mathfrak{A}=\bigoplus_1^k M_{n_i}(\CC)$ and let $J\subseteq\{1,\dots,k\}$. Then, a real-linear map $\dagger\colon \mathfrak{A}\to  \mathfrak{A}$, defined by 
$$\begin{cases}
A_i^\dagger=A_i;&\hbox{ if $i\in J$  }\\
A_i^\dagger =\overline{A_i}; &\hbox{ otherwise}
\end{cases}.$$
preserves mutual strong BJ orthogonality.
\end{lemma}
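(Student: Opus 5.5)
The plan is to show that $\dagger$ is a real-linear, multiplicative, $\ast$-preserving isometry of $\mathfrak{A}$ that commutes with the module structure in a suitable twisted sense. Then we transport the defining inequality \eqref{strong BJ} directly.

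First, the blockwise properties. On each block, $\dagger$ is either the identity or entrywise conjugation $X\mapsto \overline{X}=JXJ$, where $J$ is the conjugate-linear isometric involution of $\CC^{n_i}$ described in the introduction. In both cases:
\begin{itemize}
\item $(XY)^\dagger=X^\dagger Y^\dagger$;
\item $(X^\ast)^\dagger=(X^\dagger)^\ast$;
\item $\dagger$ is an involution.
\end{itemize}
Moreover, $\|X^\dagger\|=\|X\|$ for the operator norm, since $\|\overline{X}z\|=\|\overline{X\overline{z}}\|=\|X\overline{z}\|$ and $z\mapsto\overline{z}$ is a norm-preserving bijection of $\CC^{n_i}$. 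Because the norm on $\mathfrak{A}$ is the maximum of the block norms, and $\dagger$ acts blockwise, $\dagger$ is a surjective, multiplicative isometry of $\mathfrak{A}$ with $\dagger^{-1}=\dagger$.

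Next, the main step. Suppose $a\perp^S_{BJ} b$, i.e. $\|a+bc\|\ge\|a\|$ for every $c\in\mathfrak{A}$. Given any $c'\in\mathfrak{A}$, set $c=(c')^\dagger$. Then
$$\|a^\dagger+b^\dagger c'\|=\|(a+bc)^\dagger\|=\|a+bc\|\ge\|a\|=\|a^\dagger\|.$$
Hence $a^\dagger\perp^S_{BJ} b^\dagger$. Applying the same argument with the roles of $a$ and $b$ exchanged gives the reverse relation, so $a\strongperp_{BJ} b$ implies $a^\dagger\strongperp_{BJ} b^\dagger$.

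There is no real obstacle. The only point requiring care is that $\dagger$ is conjugate-linear on some blocks, so one should not appeal to complex-linearity anywhere. The argument above uses only multiplicativity, surjectivity of $\dagger$ (to let $c$ range over all of $\mathfrak{A}$), and isometry, all of which hold blockwise regardless of linearity type.

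As an alternative check, one could use Theorem~\ref{THEOREM} blockwise: a norming unit vector $x$ for $a$ yields the norming vector $\overline{x}$ for $\overline{a}$, and $b^\ast a x=0$ gives $\overline{b}^\ast\overline{a}\,\overline{x}=0$. The direct norm argument is cleaner, however, and avoids the reduction of strong orthogonality in $\mathfrak{A}$ to the block containing the norm.
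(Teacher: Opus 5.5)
Your proof is correct and takes essentially the paper's approach: the paper only notes that $\dagger$ preserves $\|(A_1,\dots,A_k)\|=\max_i\|A_i\|$ (via the SVD), and you additionally spell out the multiplicativity, the involutivity, and the substitution $c=(c')^\dagger$ in the defining inequality, which the paper leaves implicit. One cosmetic point: the letter $J$ already denotes the index set in the statement, so use a different symbol for the conjugation on $\CC^{n_i}$.
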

\begin{proof} This is an easy consequence of the fact that $\|(A_1,\dots A_k)\|=\max\|A_i\|=\max\|\overline{A_i}\|$ (the last identity follows from SVD of $A_i$).
\end{proof}

We next consider several subcases of low dimensional blocks separately. The first one is  an abelian $C^\ast$-algebra $\mathfrak{A}=\C^k,k\ge3$. Recall that in such $\mathfrak{A}$ the additive mutual strong BJ preserver will still permute the blocks, but at the moment we have not yet found its structure within each block.

\subsection{Case \texorpdfstring{$\C^k,k\ge3$}{TEXT}}

\begin{lemma}\label{Case C^k}
    Let $\Phi=(\varphi_1,\dots,\varphi_k)\colon \C^k\to \C^k, k\geq3$ be an additive map that preserves a mutual strong BJ orthogonality in one direction. Then $$\Phi(\lambda_1,\dots,\lambda_k)=\gamma u(\lambda_1,\dots,\lambda_k)^{\dagger},$$
    where $\gamma>0$, $u\in\C^k$ is a unitary and $a^{\dagger}$ is defined in Theorem \ref{THM:Main}.
\end{lemma}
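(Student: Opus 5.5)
Throughout I assume, as in the surrounding discussion, that $\Phi$ is surjective. By Lemma~\ref{blocks_to_blocks} it permutes the one-dimensional blocks, and after composing with $\pi^{-1}$ it acts coordinatewise:
$$\Phi(\lambda_1,\dots,\lambda_k)=(\varphi_1(\lambda_1),\dots,\varphi_k(\lambda_k)),$$
with each $\varphi_i\colon\C\to\C$ an additive surjection. The plan has four steps: translate orthogonality into a condition on coordinates, extract a modulus comparison between different coordinates, turn it into continuity of each $\varphi_i$, and then identify each $\varphi_i$ as a similitude of $\mathbb{R}^2$ with a common scale factor.

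\emph{Step 1: orthogonality in $\C^k$.} By Theorem~\ref{THEOREM} (with diagonal matrices), $a\strongp_{BJ} b$ holds if and only if $b_j=0$ for some index $j$ with $|a_j|=\|a\|$. Hence $a\strongperp_{BJ} b$ means that each element vanishes at some coordinate where the other attains its maximum modulus.

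\emph{Step 2: comparing moduli across coordinates.} Fix two distinct indices, say $1$ and $2$. Since $k\ge3$, a third index, say $3$, is available. For $|\lambda|\ge|\mu|$ and $|\nu|\ge|\mu'|$, consider
$$a=(\lambda,\mu,0,\dots,0),\qquad b=(0,\mu',\nu,0,\dots,0).$$
These are mutually strongly orthogonal: $b$ vanishes at coordinate $1$, where $a$ is maximal, and $a$ vanishes at coordinate $3$, where $b$ is maximal. Their images are $(\varphi_1(\lambda),\varphi_2(\mu),0,\dots)$ and $(0,\varphi_2(\mu'),\varphi_3(\nu),0,\dots)$. Suppose $\varphi_2(\mu)\neq0$ and $\varphi_2(\mu')\neq0$. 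Then the first image can only vanish at coordinate $3$ among the nonzero coordinates of the second, which forces $|\varphi_3(\nu)|\ge|\varphi_2(\mu')|$. Symmetrically, the second image must vanish at a maximal coordinate of the first, which forces $|\varphi_1(\lambda)|\ge|\varphi_2(\mu)|$.

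Running this over all ordered pairs of indices, I expect to obtain the following monotonicity whenever the relevant images are nonzero:
$$|\lambda|\ge|\mu|\ \Longrightarrow\ |\varphi_i(\lambda)|\ge|\varphi_j(\mu)|.$$

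\emph{Step 3: excluding degenerate zeros and getting continuity.} I expect the bookkeeping with zero images to be the main obstacle. The first thing I would try is to show that no $\varphi_j$ kills a nonzero scalar. Suppose $\varphi_j(\mu_0)=0$ with $\mu_0\neq0$. Then the configurations of Step 2 should force $\varphi_j$ to vanish on all $\mu$ with $|\mu|\le|\mu_0|$. Since every complex number is a finite sum of such $\mu$, additivity then gives $\varphi_j\equiv0$, contradicting surjectivity. With this in hand, monotonicity applies everywhere. In particular each $\varphi_i$ is bounded on the unit disc. An additive map $\C\to\C$ that is bounded on a neighbourhood of $0$ is continuous, hence $\mathbb{R}$-linear.

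\emph{Step 4: identifying the form.} Monotonicity in both directions shows that $|\lambda|=|\mu|$ implies $|\varphi_i(\lambda)|=|\varphi_j(\mu)|$ for all $i,j$. So there is a single function $f$ with $|\varphi_i(\lambda)|=f(|\lambda|)$ for every $i$. Real-linearity gives $f(t)=\gamma t$ for some $\gamma>0$, and this $\gamma$ is common to all coordinates. Consequently $\gamma^{-1}\varphi_i$ is a real-linear isometry of $\mathbb{R}^2\cong\C$, that is, a rotation or a reflection. Therefore $\varphi_i(\lambda)=\gamma u_i\lambda$ or $\varphi_i(\lambda)=\gamma u_i\overline{\lambda}$ with $|u_i|=1$. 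This is exactly the claimed form $\Phi(\lambda)=\gamma u\lambda^\dagger$ with $u=(u_1,\dots,u_k)$ unitary.
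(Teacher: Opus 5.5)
Your proposal is correct and follows essentially the paper's route. The same test pairs $(\lambda,\mu,0,\dots,0)$ against vectors vanishing in the first coordinate give the modulus monotonicity, hence boundedness, continuity and real-linearity; you then finish with the classification of real-linear isometries of $\mathbb{R}^2$ instead of the paper's explicit computation $\delta_j/\gamma_j=\pm i$. Your Step~3 bookkeeping is not actually needed: $|\varphi_i(\lambda)|\ge|\varphi_j(\mu)|$ is trivial when $\varphi_j(\mu)=0$ and otherwise follows by taking $\mu'=\nu=\mu$, so surjectivity is only used to ensure $\gamma=|\varphi_i(1)|>0$.
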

\begin{proof}
    We will divide the proof into several steps.\medskip

  \noindent  {\bf Step 1.} \textit{$|\varphi_1(x)|=\dots=|\varphi_k(x)|$ for every $x\in\C$.}

    To see this note that $\|(x_1,\dots,x_k)\|=\max\|x_i\|$. It hence  follows by definition of strong BJ orthogonality that $(x,\dots x,0_{i},x,\dots, x)\strongperp_{BJ}\, (x,\dots,x,0_{j},x,\dots, x)$, whenever $i\neq j$. 
    Applying $\Phi$ this gives
    $$(\varphi_{1}(x),\dots,0_{i},\dots, \varphi_{k}(x))\strongperp_{BJ}\, (\varphi_{1}(x),\dots,0_{j},\dots, \varphi_{k}(x)),$$
    from where we deduce
    $$|\varphi_j(x)|\ge|\varphi_{\ell}(x)| \quad \text{\ for all $\ell\neq i$}\quad\hbox{  and\ }\quad |\varphi_i(x)|\ge|\varphi_{\ell}(x)| \quad\text{\ for all $\ell\neq j$}.$$
    Continuing in the same manner, by taking all possible $i\neq j$, we conclude that $|\varphi_1(x)|=|\varphi_2(x)|=\dots=|\varphi_k(x)|$ for every $x\in\C$.\medskip

  \noindent  {\bf Step 2.} \textit{If $\lambda,\mu\in\C$ and $|\lambda|=|\mu|$ then $|\varphi_i(\lambda)|=|\varphi_j(\mu)|$ for every $1\le i,j\le k$.}

    To see this, note that $(\lambda,\mu,0,\dots,0)\strongperp_{BJ}\,(0,\mu,\mu,\dots,\mu)$, so applying $\Phi$ we get $$(\varphi_1(\lambda),\varphi_2(\mu),0,\dots,0)\strongperp_{BJ}\,(0,\varphi_2(\mu),\varphi_3(\mu),\dots,\varphi_k(\mu)),$$
    and we can conclude $|\varphi_1(\lambda)|\ge|\varphi_2(\mu)|$. Now, if we change $(0,\mu,\mu,\dots,\mu)$ to $(\lambda,0,\lambda,\dots,\lambda)$, we also get $|\varphi_2(\mu)|\ge|\varphi_1(\lambda)|$, so $|\varphi_2(\mu)|=|\varphi_1(\lambda)|$. By Step 1, $|\varphi_i(\lambda)|=|\varphi_1(\lambda)|=|\varphi_2(\mu)|=|\varphi_j(\mu)|$ for every $1\le i,j\le k$, as claimed.
\medskip

  \noindent  
    {\bf Step 3.} \textit{If $\lambda,\mu\in\C$ and $|\lambda|>|\mu|$ then $|\varphi_j(\lambda)|\ge|\varphi_i(\mu)|$ for every $1\le i,j\le k$.}

    To see this, note that $(\lambda,\mu,0,\dots,0)\strongperp_{BJ}\,(0,\mu,\mu,\dots,\mu)$, so applying $\Phi$ we get 
    $$(\varphi_1(\lambda),\varphi_2(\mu),0,\dots,0)\strongperp_{BJ}\,(0,\varphi_2(\mu),\varphi_3(\mu),\dots,\varphi_k(\mu)),$$
    so $|\varphi_1(\lambda)|\ge|\varphi_2(\mu)|$, which by Step 2 proves the claim. \medskip

  \noindent   {\bf Step 4.} \textit{$\varphi_i$ is continuous for every $1\le i\le k$.}
    
    To see this, choose $\varepsilon>0$ and let $n\in\mathbb{N}$ be such that $\frac{|\varphi_i(1)|}{n}<\varepsilon$. Define $\delta=\frac{1}{n}$. Then if $|\alpha-\beta|<\delta$, using additivity, the fact that $|n(\alpha-\beta)|<1$ and Step 3, respectively, we get 
    $$|\varphi_i(\alpha)-\varphi_i(\beta)|=\frac{1}{n}\left|\varphi_i(n(\alpha-\beta))\right|\leq\frac{1}{n}|\varphi_i(1)|<\varepsilon.$$
  \noindent      {\bf Step 5.} \textit{$\varphi_j(a+bi)=a\varphi_j(1)+b\varphi_j(i)$ for every $1\le j\le k$ and every $a,b\in\mathbb{R}$.}

    To see this, let $\gamma_j:=\varphi_j(1)$. By additivity, $\varphi_j(a)=a\cdot\gamma_j$ for every $a\in\mathbb{Q}$. Then, by continuity (Step 4) $\varphi_j(a)=a\cdot\gamma$ for every $a\in\mathbb{R}$. Similarly, defining $\delta_j:=\varphi_j(i)$ and using additivity and continuity, we get $\varphi_j(bi)=bi\cdot\delta_j$. The statement then follows directly from the additivity of the mapping $\varphi_j$.\medskip

    \noindent{\bf Step 6.} \textit{$\varphi_j(\lambda)=\varphi_j(1)\lambda$ for every $\lambda\in\C$ or $\varphi_j(\lambda)=\varphi_j(1)\overline{\lambda}$ for every $\lambda\in\C$.}

    To see this, by Step~2, $|\varphi_j(\lambda)|$ is constant for every $|\lambda|=1$. Then by Step~5 $$|c\cdot\gamma_j\pm\sqrt{1-c^2}\cdot\delta_j|$$ is constant for every $c\in[-1,1]$. If $\gamma_j=0$, then $\delta_j=0$, so $\varphi_j\equiv0$, which contradicts surjectivity. Hence, 
    $\left|c\pm\sqrt{1-c^2}\frac{\delta_j}{\gamma_j}\right|$
    is constant for every $c\in[-1,1]$. By inserting $c=1$, we get that this constant is equal to 1, and hence, $\left|c\pm\sqrt{1-c^2}\frac{\delta_j}{\gamma_j}\right|=1$ for every $c\in[-1,1]$. Inserting $c=0$ gives further
    \begin{equation}\label{delta_gamma_1}
        |\delta_j|=|\gamma_j|.
    \end{equation}
    Finally, inserting $c=\frac{1}{\sqrt{2}}$, gives 
    \begin{equation}\label{delta_gamma_2}
        \left|1\pm\tfrac{\delta_j}{\gamma_j}\right|=\sqrt{2}.
    \end{equation}
    It follows from \eqref{delta_gamma_1} and \eqref{delta_gamma_2} that $\frac{\delta_j}{\gamma_j}=\pm i$. Consequently, $\varphi_j(a+bi)=a\gamma_j+bi\gamma_j =(a+bi)\gamma_j$ or $\varphi_j(a+bi)=a\gamma_j-bi\gamma_j =(a-bi)\gamma_j$ for every $a,b\in\mathbb{R}$, which was the desired result.

    \noindent{\bf Step 7.} \textit{Conclusion of the proof.}
    
    Define $\gamma=|\varphi_1(1)|>0$ and $u=\left(\frac{\varphi_1(1)}{|\varphi_1(1)|},\frac{\varphi_2(1)}{|\varphi_1(1)|},\dots,\frac{\varphi_k(1)}{|\varphi_1(1)|}\right)$, which is unitary by Step 1. Then, by Step 6, the map $\Phi$ has the desired form.
\end{proof} 

\subsection{Case \texorpdfstring{$\C\oplus M_{n_j}(\C),n_j\ge2$}{TEXT}.}
\begin{lemma}\label{Case_C_plus_M}
Let $\Phi=(\varphi_1,\varphi_2)\colon \C\oplus M_{n_j}(\C)\to \C\oplus M_{n_j}(\C),n_j\ge2$, where $\varphi_2(A)=AQ$ for some invertible matrix $Q\in M_{n_j}(\C)$. If $\Phi$ is an additive mutual strong BJ orthogonality preserver, then $Q$ is a scalar multiple of a unitary and $\varphi_1(\lambda)=\varphi_1(1)\lambda$ for every $\lambda\in\C$ or $\varphi_1(\lambda)=\varphi_1(1)\overline{\lambda}$ for every $\lambda\in\C$. In addition, $\|Q\|=|\varphi_1(1)|$.   
\end{lemma}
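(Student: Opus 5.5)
We plan to extract information from test pairs that mix the scalar summand with the matrix summand. The norm is $\|(\lambda,A)\|=\max(|\lambda|,\|A\|)$, and strong BJ orthogonality in a direct sum can be read off componentwise. Concretely, $(\lambda,A)\strongp_{BJ}(\mu,B)$ holds if and only if $\|(\lambda,A)+(\mu,B)c\|\ge\|(\lambda,A)\|$ for every $c=(c_1,C)$. This happens exactly when a component attaining the maximum norm is "orthogonal" in its own summand. So it requires either $|\lambda|\ge\|A\|$ with $\mu=0$ (or $\lambda=0$), or $\|A\|\ge|\lambda|$ with $A\strongp_{BJ}B$.

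We first verify this description, since it is the tool for everything else. If the norm is attained at $\lambda$ and $\mu=0$, then the first coordinate is unchanged, so the inequality holds. Conversely, if $\mu\neq0$ and $|\lambda|>\|A\|$, choose $c_1=-\lambda/\mu$ and $C=0$ to lower the norm. The matrix case is analogous, with Theorem~\ref{THEOREM} governing the matrix component.

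First, $Q$ is a scalar multiple of a unitary. Take rank-one $x\otimes y$ and $x\otimes z$ with $y\perp z$, so that $x\otimes y\strongperp_{BJ}\, x\otimes z$ in $M_{n_j}(\C)$ (observation (iii) gives $(x\otimes y)^\ast(x\otimes z)=0$). Embed them as $(0,x\otimes y)$ and $(0,x\otimes z)$. Their images $(0,x\otimes Q^\ast y)$ and $(0,x\otimes Q^\ast z)$ must again be mutually orthogonal. By (iii) this forces $\langle Q^\ast y,Q^\ast z\rangle=0$ whenever $y\perp z$, and such an orthogonality-preserving map $Q^\ast$ is a scalar multiple of an isometry. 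Alternatively one can mimic the $\tfrac12(e_1+e_k)$ trick of Lemma~\ref{tranpose_impossible}.

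Next, the form of $\varphi_1$. Since $\Phi$ is surjective and permutes blocks, $\varphi_1$ is an additive surjection of $\C$. We compare moduli exactly as in Steps~2--3 of Lemma~\ref{Case C^k}, using mixed pairs. Take $(\lambda,0)$ and $(\mu,\mu E)$ with $E$ a rank-one projection and $|\mu|\le|\lambda|$. We want to find $B$ and $\nu$ for which the pair $(\lambda,\alpha E)$, $(\nu,B)$ is mutually orthogonal precisely when $|\lambda|\ge|\alpha|$. A concrete choice is $(\lambda,\alpha E)\strongperp_{BJ}(0,\beta(I-E))$: this holds when the matrix component $\alpha E$ has the maximal norm, or, if $|\lambda|$ is maximal, because the second element has first coordinate $0$. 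To get a genuine inequality we instead use $(\lambda,\alpha E)$ versus $(\lambda',\alpha E')$ with $E\perp E'$. Mutual orthogonality then requires each element's maximal component to be the matrix component, with the other element's scalar being anything, or the scalar component if the other scalar is $0$.

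So for $|\alpha|\ge|\lambda|$ we have $(\lambda,\alpha E)\strongperp_{BJ}(\lambda,\alpha E')$. Applying $\Phi$ gives $(\varphi_1(\lambda),\alpha EQ)$ and $(\varphi_1(\lambda),\alpha E'Q)$, where $\alpha EQ$ and $\alpha E'Q$ are rank-one with orthogonal ranges. Since $\varphi_1(\lambda)\ne0$, these images are orthogonal only if $\|\alpha EQ\|=|\alpha|\|Q\|\ge|\varphi_1(\lambda)|$. Conversely, $(\lambda,\alpha E)\strongperp_{BJ}(0,\alpha E')$ for all $\lambda$ gives nothing new, so we use the pair $(\lambda,\alpha E)$ and $(0,\beta I)$: this is mutually orthogonal iff $|\lambda|\ge\|\alpha E\|$. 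Mapping it yields $|\varphi_1(\lambda)|\ge|\alpha|\|Q\|$.

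Together these give $|\varphi_1(\lambda)|=\|Q\|\,|\lambda|$ for all $\lambda$. Exactly as in Steps~4--6 of Lemma~\ref{Case C^k}, this yields continuity, real-linearity, and then $\varphi_1(\lambda)=\varphi_1(1)\lambda$ or $\varphi_1(1)\overline{\lambda}$. It also gives $|\varphi_1(1)|=\|Q\|$.

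The main obstacle is the careful verification of the mixed-sum orthogonality criteria. In particular, we must choose test pairs whose orthogonality holds under exactly one inequality between $|\lambda|$ and $\|A\|$, and check that the images genuinely violate orthogonality when the inequality fails.
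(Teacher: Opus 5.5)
\textbf{There is a genuine gap in your step showing that $Q$ is a scalar multiple of a unitary.}

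With the paper's convention ($x\otimes y$ maps $w$ to $(y^\ast w)x$) one gets $(x\otimes y)^\ast(x\otimes z)=\|x\|^2\,y\otimes z\neq 0$. So $x\otimes y$ and $x\otimes z$ are not mutually strongly BJ orthogonal; for rank-one operators the relation means orthogonal \emph{ranges}.

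More fundamentally, no test pair inside the matrix summand can work. If $A^\ast B=0$ then $(AQ)^\ast(BQ)=Q^\ast A^\ast BQ=0$ for every invertible $Q$. When $n_j=2$, every nonzero mutually orthogonal pair in $M_2(\C)$ is a pair of rank-one matrices with $A^\ast B=0$. Hence $A\mapsto AQ$ preserves the relation on $M_2(\C)$ for \emph{all} invertible $Q$, as the paper's closing example notes. Likewise, the $\tfrac12(e_1+e_k)$ trick of Lemma~\ref{tranpose_impossible} works for a left factor, since $(PA)^\ast(PB)=A^\ast P^\ast PB$, but gives nothing for a right factor.

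The constraint on $Q$ therefore has to come from the interaction with the scalar summand. As written, your later identity $\|\alpha EQ\|=|\alpha|\,\|Q\|$ rests on this failed step.

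The repair is already in your mixed-pair argument once the norm is computed correctly. For $E=e\otimes e$ with $\|e\|=1$ one has $\|\alpha EQ\|=|\alpha|\,\|Q^\ast e\|$. Take $|\alpha|=|\lambda|\neq 0$:
\begin{itemize}
\item your pair $(\lambda,\alpha E)$, $(\lambda,\alpha E')$ yields $|\varphi_1(\lambda)|\le|\lambda|\,\|Q^\ast e\|$;
\item your pair $(\lambda,\alpha E)$, $(0,\beta I)$ yields $|\varphi_1(\lambda)|\ge|\lambda|\,\|Q^\ast e\|$.
\end{itemize}
Hence $\|Q^\ast e\|=|\varphi_1(\lambda)|/|\lambda|$ for every unit vector $e$. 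This constant is positive since $Q$ is invertible. So $Q^\ast$, and thus $Q$, is a positive multiple of a unitary, and simultaneously $|\varphi_1(\lambda)|=\|Q\|\,|\lambda|$. Steps~4--6 of Lemma~\ref{Case C^k} then finish as you describe.

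This corrected version is essentially the paper's proof. The paper uses the pairs $(\lambda,x\otimes x)$, $(\lambda,x'\otimes x')$ and $(\lambda,x\otimes x)$, $(0,I)$ to get $\|Q^\ast x\|=|\varphi_1(\lambda)|$ for all unit $x$ and $|\lambda|=1$. Your componentwise description of $\perp^S_{BJ}$ on $\C\oplus M_{n_j}(\C)$ is correct.
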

\begin{proof} Note that for all $\lambda\in\C, |\lambda|\le1$  and $x\in\C^2, \|x\|=1$ the operator $(\lambda,x\otimes x)$ attains its norm at $(0,x)$, so by Theorem~\ref{THEOREM} we have       
$$(\lambda,x\otimes x)\strongperp_{BJ}\,(\lambda,x'\otimes x'),$$
    where $x'$ denotes any vector perpendicular to $x$, with $\|x'\|=\|x\|$.  Applying $\Phi=(\varphi_1,\varphi_2)$, we get  
    $$(\varphi_1(\lambda),(x\otimes x) Q)\strongperp_{BJ}\,(\varphi_1(\lambda),(x'\otimes x')Q).$$
    It follows, again by Theorem \ref{THEOREM} that
    \begin{equation}\label{Qlambda}
        \|Q^{\ast}x\|=\|(x\otimes x)Q\|\ge|\varphi_1(\lambda)|.
    \end{equation}
    
    Assume, in addition $|\lambda|=1$. Notice that $(0,I)$ attains its norm on $(0,x')$ while $(\lambda,x\otimes x)$ attains its norm on $(1,0)$, so from Theorem \ref{THEOREM} 
    $$(\lambda,x\otimes x)\strongperp_{BJ}\,(0,I).$$  
    Applying $\Phi$ we get
    $$(\varphi_1(\lambda),(x\otimes x)Q)\strongperp_{BJ}\,(0,Q),$$
    so by the definition of strong BJ orthogonality, keeping in mind that $Q$ is invertible, we necessarily have $$|\varphi_1(\lambda)|\ge\|(x\otimes x)Q\|=\|Q^{\ast}x\|.$$ Together with \eqref{Qlambda}, it follows that $\|Q^\ast x\|=|\varphi_1(\lambda)|$ for all unit vectors $x\in\C^2$ and $|\lambda|=1$. Consequently, $Q^{\ast}$ is a scalar multiple of a unitary and $|\varphi_1|$ is constant on a unit circle.
    
    Moreover, \eqref{Qlambda} now implies $|\varphi_1(\lambda)|\le\|Q^\ast x\|$ for all $|\lambda|\leq1$ with an equality for $|\lambda|=1$. Hence, as in the proof of Steps 4-6 of Lemma \ref{Case C^k}, $\varphi_1$ is continuous, additive, and actually of the stated form.
\end{proof}

\subsection{Case \texorpdfstring{$M_{n_{\ell}}(\C)\oplus M_{n_j}(\C),n_{\ell},n_j\ge2$}{TEXT}.}
\begin{lemma}\label{Case_M_plus_M}
    Let $\Phi=(\varphi_1,\varphi_2)\colon M_{n_{\ell}}(\C)\oplus M_{n_j}(\C)\to M_{n_{\ell}}(\C)\oplus M_{n_j}(\C), n_{\ell},n_j\ge2$, where $\varphi_i(A)=AQ_i$ for some invertible matrices $Q_1\in M_{n_{\ell}}(\C)$ and $Q_2\in M_{n_j}(\C)$. If $\Phi$ is a mutual strong BJ orthogonality preserver, then  $Q_1$ and $Q_2$ are scalar multiples of  unitaries and $\|Q_1\|=\|Q_2\|$.
\end{lemma}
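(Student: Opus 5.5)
We mimic the proof of Lemma~\ref{Case_C_plus_M}, replacing the scalar block by a matrix block. The plan is to prove two inequalities between $\|Q_1^\ast x\|$ and $\|Q_2^\ast y\|$ for arbitrary unit vectors $x\in\C^{n_\ell}$ and $y\in\C^{n_j}$, and then conclude that both quantities are constant.

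\textbf{First inequality.} Take $x' \perp x$ and $y'\perp y$ of the same norm; these exist because $n_\ell,n_j\ge2$. The element $(x\otimes x, y\otimes y)$ attains its norm $1$ at $(x,0)$. Moreover
$$(x'\otimes x')^\ast (x\otimes x)x=0 \quad\text{and}\quad (y'\otimes y')^\ast\cdot 0=0,$$
so Theorem~\ref{THEOREM} gives
$$(x\otimes x,y\otimes y)\strongperp_{BJ}\,(x'\otimes x', y'\otimes y').$$
Applying $\Phi$ yields
$$\bigl((x\otimes x)Q_1,(y\otimes y)Q_2\bigr)\strongperp_{BJ}\,\bigl((x'\otimes x')Q_1,(y'\otimes y')Q_2\bigr).$$
By Theorem~\ref{THEOREM} applied in $M_{n_\ell+n_j}(\C)$, the left element attains its norm at some unit vector $z=(z_1,z_2)$ with
$$((x'\otimes x')Q_1)^\ast (x\otimes x)Q_1 z_1 = Q_1^\ast x'\,\langle x',x\rangle\cdots=0.$$
This product vanishes automatically, so this relation alone gives nothing. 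We therefore switch to testing against $(0,I)$ as in Lemma~\ref{Case_C_plus_M}.

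The element $(x\otimes x,0)$ attains its norm at $(x,0)$, and $(0,I)^\ast(x\otimes x,0)(x,0)=0$, so
$$(x\otimes x,0)\strongperp_{BJ}\,(0,I)$$
holds in both directions. To make this informative, use instead $(x\otimes x, y\otimes y)$ against $(0,I)$. Here one direction holds because the norm is attained at $(x,0)$. For the other direction, $(0,I)$ attains its norm at $(0,y')$, and $(x\otimes x,y\otimes y)^\ast$ applied to $(0,y')$ gives $0$, so that direction holds as well. Applying $\Phi$ gives
$$\bigl((x\otimes x)Q_1,(y\otimes y)Q_2\bigr)\strongperp_{BJ}\,(0,Q_2).$$
Since $Q_2$ is invertible, we may choose $c=(0,-Q_2^{-1}(y\otimes y)Q_2)$ in the definition of strong BJ orthogonality. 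This kills the second component and leaves
$$\|Q_1^\ast x\| \ge \max\bigl(\|Q_1^\ast x\|,\|Q_2^\ast y\|\bigr),$$
that is,
$$\|Q_1^\ast x\|\ge\|Q_2^\ast y\|.$$

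\textbf{Reverse inequality and conclusion.} Using $(I,0)$ instead of $(0,I)$, the same argument gives $\|Q_2^\ast y\|\ge\|Q_1^\ast x\|$. Hence $\|Q_1^\ast x\|=\|Q_2^\ast y\|$ for all unit vectors $x$ and $y$. Fixing $y$, the quantity $\|Q_1^\ast x\|$ is constant on the unit sphere, so $Q_1^\ast$, and hence $Q_1$, is a scalar multiple of a unitary. By symmetry the same holds for $Q_2$, and the common value of the constants gives $\|Q_1\|=\|Q_2\|$.

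The main point to check carefully is that the relation against $(0,I)$ is a genuinely mutual strong orthogonality before applying $\Phi$. Its verification via Theorem~\ref{THEOREM} in both directions uses the norm-attaining vectors $(x,0)$ and $(0,y')$, and the second of these requires $n_j\ge2$.
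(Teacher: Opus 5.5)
Your argument is correct and is essentially the paper's: both apply $\Phi$ to a mutually strongly orthogonal pair so that the invertible $Q_2$ (resp.\ $Q_1$) sits in one block of the image, use that invertibility to force the norm into the other block, and then combine the two symmetric inequalities. The only difference is the test pair. The paper uses $(x\otimes x,I)\strongperp_{BJ}(x'\otimes x',I)$, where Theorem~\ref{THEOREM} forces the norming vector into the first block and gives $\|Q_1^\ast x\|\ge\|Q_2\|$. You use $(x\otimes x,y\otimes y)\strongperp_{BJ}(0,I)$ with an explicit $c$, as in Lemma~\ref{Case_C_plus_M}, and get $\|Q_1^\ast x\|\ge\|Q_2^\ast y\|$, which is the same bound after taking the supremum over $y$.
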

\begin{proof}
    Note that for all unit vectors $x\in\mathbb{C}^{n_{\ell}}$,  the operator $(x\otimes x,I)$ attains its norm on $(x,0)$, so by Theorem \ref{THEOREM} we have 
    \begin{equation}
        (x\otimes x,I)\strongperp_{BJ}\, (x'\otimes x',I).
    \end{equation}
    Applying $\Phi$ we get
    $$((x\otimes x)Q_1,Q_2)\strongperp_{BJ}((x'\otimes x')Q_1,Q_2),$$
    which, again by Theorem \ref{THEOREM} and invertibility of $Q_2$ implies that
    $$\|x\otimes Q_1^{\ast}x\|\ge\|Q_2\|.$$
    Therefore, 
    $$\|Q_1\|=\|Q_1^{\ast}\|\ge\|Q_1^\ast x\|=\|x\otimes Q_1^{\ast}x\|\ge\|Q_2\|,$$
    holds for all unit vectors $x\in\C^{n_{\ell}}$. Moreover, applying $\Phi$ on a pair 
    $$(I,x\otimes x)\strongperp_{BJ}\, (I,x'\otimes x'),$$
    for unit vector $x\in\C^{n_j}$, we obtain analogously that $$\|Q_2\|\ge\|Q_2^\ast x\|\ge\|Q_1\|$$
    holds for all unit vectors $x\in\C^{n_j}$. Consequently, $\|Q_1\|=\|Q_2\|$, and both $Q_1$ and $Q_2$ are scalar multiples of unitaries. 
\end{proof}

\begin{proof}[Proof of Theorem \ref{THM:Main}]
    As mentioned in \eqref{finite-dimensional} we may assume that $\mathfrak{A}$ is a block-diagonal matrix algebra.
    Now, by Lemma \ref{singula_to_singular} $\Phi$ preserves singularity and hence by Theorem \ref{singularity_preservers} there exists a permutation of blocks $\pi$, which respects their dimension, such that 
    $$\Phi(M_{n_i}(\C))=\pi(M_{n_i}(\C)).$$ Notice that $\pi$ is a linear $\ast$-isometry, so it preserves mutual strong  BJ orthogonality in both direciotns. Therefore, by composing $\Phi$ with $\pi^{-1}$ we obtain an additive surjection (denoted again by $\Phi$), which still preserves mutual strong BJ orthogonality, and satisfies $$\Phi(M_{n_i}(\C))=M_{n_i}(\C).$$ Hence, we may decompose
    $$\Phi=\Phi_1\oplus\dots\oplus\Phi_k.$$
    Assume $n_i\ge2$. Then, by Theorem \ref{singularity_preservers}, there exists a field endomorphism $\sigma_i\colon\C\to\C$ and invertible matrices $P_i,Q_i\in M_{n_i}(\C)$, such that 
    $$\Phi_i\colon A_i\mapsto P_iA_i^{\sigma_i} Q_i\quad \hbox{ or }\quad \Phi_i\colon A_i\mapsto P_i(A_i^T)^{\sigma_i} Q_i.$$
    The second case is impossible by Lemma \ref{tranpose_impossible}. In the first case, by Lemma \ref{P_and_sigma}, $P_i=\lambda_iU_i$ is a scalar multiple of unitary and $\sigma_i$ is identity or complex conjugation. Hence if we replace $\Phi$ by a map $U^{\ast}\Phi$, for a unitary $U=\bigoplus_{i=1}^{k}U_i$, with $U_{\ell}=1$ if $n_{\ell}=1$, we can and will assume that 
    $$\Phi_i\colon A_i\to A_i^{\dagger}(\lambda_i Q_i);\quad A_i\in M_{n_i}(\C)\text{\ with\ }n_i\ge2.$$

    Assume $n_i=1$. By the assumptions on $\mathfrak{A}$ we have that $\mathfrak{A}$ is either abelian with at least three blocks, or else there exists a block with $n_j\ge2$. In the first case, by Lemma \ref{Case C^k} we imediately get a form as stated in (ii) of Theorem \ref{THM:Main}. In the second case, $\Phi_i$ is a scalar multiple of identity on $\C$ or a scalar multiple of complex conjugation on $\C$, while $Q_j$ is a scalar multiple of a unitary.
    
    By Lemma \ref{lemma:dagger}, if we compose the additive~$\Phi=\bigoplus\Phi_i$ with a map $\Psi\colon\mathfrak{A}\to\mathfrak{A}$, which is identity on blocks with $\Phi_i$ linear, and entrywise conjugation on blocks with $\Phi_i$ conjugate-linear, we will get a linear preserver of mutual strong BJ orthogonality. Clearly, $\Phi_i$ must also preserves mutual strong BJ orthogonality. Now, we are in a position to apply a known result \cite[Theorem 2.7]{StrongMutualPreservers} by which $Q_i=\mu_iV_i$ is a scalar multiple of a unitary if $n_i\ge3$. The same conclusion holds also if $n_i=2$ by Lemma \ref{Case_M_plus_M}. We may hence replace $\Phi$ by $\Phi V^\ast$ for a unitary $V=\bigoplus_{i=1}^{k}V_i$, with $V_{\ell}=1$ if $n_{\ell}=1$ to achieve that $$\Phi(A_1\oplus\dots\oplus A_k)=(\lambda_1\mu_1A_1\oplus\dots\oplus\lambda_k\mu_kA_k).$$
    By the last statements in Lemmas \ref{Case_C_plus_M} and \ref{Case_M_plus_M} we conclude that $$|\lambda_1\mu_1|=\dots=|\lambda_k\mu_k|=\gamma>0.$$ 
    Hence $$\Phi(A_1\oplus\dots\oplus A_k)=\gamma U(A_1\oplus\dots\oplus A_k),$$ 
    for a unitary $U=\left(\frac{\lambda_1\mu_1}{\gamma}I_{n_1},\dots,\frac{\lambda_k\mu_k}{\gamma}I_{n_k}\right)$.
\end{proof}

\section{Concluding remarks}

Let us show by examples  that the conclusions of Theorem \ref{THM:Main} are not valid if its assumptions fail. Recall than if  $\mathfrak{A}=\C\oplus\C$, then a nonstandard (nonsurjective) linear preserver of mutual strong  BJ orthogonality was already given in Example~\ref{exam:32}.

\begin{example}
    If $\mathfrak{A}=\C$, then $a\perp_{BJ}^S b$ if and only if one of $a,b\in\CC$ vanishes, which is, therefore, equivalent to  $a\strongperp_{BJ}b$. Here, every additive map on $\CC$ preserves a mutual strong BJ orthogonality.
\end{example}

\begin{example}
    If $\mathfrak{A}=\C\oplus\C$, an additive surjection $\Phi\colon\mathfrak{A}\to\mathfrak{A}$ defined by $$\Phi(\lambda,\mu)=(\lambda,2\mu)$$ preserves a mutual strong BJ orthogonality, but it is not of the form stated in Theorem \ref{THM:Main}. 
\end{example}

\begin{example}
   If $\mathfrak{A}=M_2(\C)$, and an additive surjection $\Phi\colon\mathfrak{A}\to\mathfrak{A}$ preserves mutual strong BJ orthogonality,   then by Lemma \ref{singula_to_singular} and Theorem \ref{singularity_preservers} we have $\Phi(A)=PA^{\sigma}Q$ or $\Phi(A)=P(A^T)^{\sigma}Q$ for some invertible $P,Q\in M_2(\C)$ and field isomorphism $\sigma\colon\C\to\C$. The second case is impossible by Lemma \ref{tranpose_impossible}. By Lemma \ref{P_and_sigma} $P$ is a scalar multiple of a unitary operator and $\sigma$ is an identity or conjugation. If needed, we can replace $\Phi$ by $X\mapsto \overline{\Phi(X)}$ (entrywise conjugation) to achieve that $\Phi$ is linear. Finally (this part is actually from \cite[Theorem 2.9.]{StrongMutualPreservers}, $\Phi(A)=PA Q$ is a mutual strong BJ preserver for every invertible $Q$.
\end{example}

\begin{remark}
    If we do not impose additivity, there will be wild-types of strong BJ orthogonality preservers; see \cite[Theorem 3.27 and Example 3.30]{KST}.
\end{remark}

\section{Acknowledgements}
Part of this work was carried out during the visit of S. Stefanović to the University of Primorska within the ERASMUS+ project, which he thanks for the hospitality.

\bibliographystyle{abbrv}
\bibliography{BJ_orthogonality_preservers}

\end{document}